\begin{document}


\makeatletter
\@addtoreset{figure}{section}
\def\thefigure{\thesection.\@arabic\c@figure}
\def\fps@figure{h,t}
\@addtoreset{table}{bsection}

\def\thetable{\thesection.\@arabic\c@table}
\def\fps@table{h, t}
\@addtoreset{equation}{section}
\def\theequation{
\arabic{equation}}
\makeatother

\newcommand{\bfi}{\bfseries\itshape}

\newtheorem{theorem}{Theorem}
\newtheorem{acknowledgment}[theorem]{Acknowledgment}
\newtheorem{corollary}[theorem]{Corollary}
\newtheorem{definition}[theorem]{Definition}
\newtheorem{example}[theorem]{Example}
\newtheorem{lemma}[theorem]{Lemma}
\newtheorem{notation}[theorem]{Notation}
\newtheorem{problem}[theorem]{Problem}
\newtheorem{proposition}[theorem]{Proposition}
\newtheorem{remark}[theorem]{Remark}
\newtheorem{setting}[theorem]{Setting}

\numberwithin{theorem}{section}
\numberwithin{equation}{section}

\newcommand{\Bg}{\mathfrak{B}}

\renewcommand{\1}{{\bf 1}}
\newcommand{\Ad}{{\rm Ad}}
\newcommand{\Aut}{{\rm Aut}\,}
\newcommand{\ad}{{\rm ad}}
\newcommand{\botimes}{\bar{\otimes}}
\newcommand{\Ci}{{\mathcal C}^\infty}
\newcommand{\Der}{{\rm Der}\,}
\newcommand{\de}{{\rm d}}
\newcommand{\ee}{{\rm e}}
\newcommand{\End}{{\rm End}\,}
\newcommand{\ev}{{\rm ev}}
\newcommand{\hotimes}{\widehat{\otimes}}
\newcommand{\id}{{\rm id}}
\newcommand{\ie}{{\rm i}}
\newcommand{\Img}{{\rm Im}\,}
\newcommand{\Ker}{{\rm Ker}\,}
\newcommand{\Kern}{{\rm Kern}}
\newcommand{\Lie}{\text{\bf L}}
\newcommand{\Ran}{{\rm Ran}\,}
\newcommand{\RUCb}{{{\mathcal R}{\mathcal U}{\mathcal C}_b}}
\renewcommand{\Re}{{\rm Re}\,}
\newcommand{\spa}{{\rm span}\,}
\newcommand{\supp}{{\rm supp}\,}

\newcommand{\CC}{{\mathbb C}}
\newcommand{\HH}{{\mathbb H}}
\newcommand{\RR}{{\mathbb R}}
\newcommand{\TT}{{\mathbb T}}

\newcommand{\Ac}{{\mathcal A}}
\newcommand{\Bc}{{\mathcal B}}
\newcommand{\Cc}{{\mathcal C}}
\newcommand{\Dc}{{\mathcal D}}
\newcommand{\Ec}{{\mathcal E}}
\newcommand{\Fc}{{\mathcal F}}
\newcommand{\Hc}{{\mathcal H}}
\newcommand{\Jc}{{\mathcal J}}
\newcommand{\Kc}{{\mathcal K}}
\newcommand{\Rc}{{\mathcal R}}
\newcommand{\Yc}{{\mathcal Y}}
\newcommand{\Xc}{{\mathcal Xc}}

\renewcommand{\gg}{{\mathfrak g}}

\newcommand{\ZZ}{\mathbb Z}
\newcommand{\NN}{\mathbb N}
\newcommand{\BB}{\mathbb B}

\newcommand{\ep}{\varepsilon}

\newcommand{\hake}[1]{\langle #1 \rangle }

\newcommand{\scalar}[2]{\langle #1 ,#2 \rangle }
\newcommand{\vect}[2]{(#1_1 ,\ldots ,#1_{#2})}
\newcommand{\norm}[1]{\Vert #1 \Vert }
\newcommand{\normrum}[2]{{\norm {#1}}_{#2}}

\newcommand{\upp}[1]{^{(#1)}}
\newcommand{\p}{\partial}

\newcommand{\opn}{\operatorname}
\newcommand{\slim}{\operatornamewithlimits{s-lim\,}}
\newcommand{\sgn}{\operatorname{sgn}}

\newcommand{\seq}[2]{#1_1 ,\dots ,#1_{#2} }
\newcommand{\loc}{_{\opn{loc}}}

\makeatletter
\title[Inverse-closed algebras of integral operators]{Inverse-closed algebras of integral operators\\ on 
locally compact groups}
\author{Ingrid Belti\c t\u a and Daniel Belti\c t\u a}
\address{Institute of Mathematics ``Simion Stoilow'' 
of the Romanian Academy, 
P.O. Box 1-764, Bucharest, Romania}
\email{ingrid.beltita@gmail.com, Ingrid.Beltita@imar.ro}
\email{beltita@gmail.com, Daniel.Beltita@imar.ro}
\keywords{locally compact group; covariance algebra; uniformly continuous function}
\subjclass[2000]{Primary 43A15; Secondary 43A20}
\date{\today}
\makeatother

\begin{abstract} 
We construct some inverse-closed algebras of bounded integral operators 
with operator-valued kernels, 
acting in spaces of vector-valued functions on locally compact groups. 
To this end we make use of covariance algebras 
associated to $C^*$-dynamical systems defined by the $C^*$-algebras 
of right uniformly continuous functions with respect to the left regular representation. 
\end{abstract}

\maketitle


\section{Introduction}

The spectral investigation on differential operators by using the $C^*$-algebras generated by their resolvents 
has attracted much interest recently, motivated to a large extent by problems that come 
from the quantum physics; 
see for instance \cite{GI02}, \cite{BG08}, \cite{Ge11}, \cite{BG13} and the references therein.  
These operators often act in the Hilbert space $L^2(\RR^n)$, 
and yet it turned out in \cite{Ge11} that a deep insight into the spectral theory can be gained 
by working in a more general setting obtained by replacing the abelian group $(\RR^n,+)$ by 
a Lie group or even by a locally compact group $G$. 
We contribute to this circle of ideas by 
constructing some inverse-closed Banach algebras of $L^2$-bounded integral operators 
on locally compact groups, a property called sometimes the Wiener property. 
The  relevance to the spectral problems is enhanced by the fact 
that our algebras of integral operators are Banach algebras which contain the compact operators and 
are continuously and densely embedded 
into the $C^*$-algebras generated by the resolvents of differential operators 
studied in \cite{DG04} and \cite{Ge11} (Theorem~\ref{final} and Corollary~\ref{last}). 

The method of our study is provided by the link 
to the duality theory for crossed products (Remark~\ref{history}) 
and by the systematic use of covariance algebras 
associated to $C^*$-dynamical systems 
defined by the $C^*$-algebras of right uniformly continuous functions 
with respect to the left regular representation. 
This method allows us to partly simplify 
the proofs of some results from the earlier literature 
and also to obtain some new results. 

A particular feature of our work is that we deal with operator-valued integral kernels, 
which define integral operators on spaces of vector-valued functions on locally compact groups. 
This possibility was mentioned in \cite{GI02} in the case of abelian groups.  
Besides that, in the case of discrete groups, this allows us to construct inverse-closed algebras 
of bounded operators defined by block matrices. 

In this connection we recall that pseudo-differential operators with operator-valued symbols 
have been used in the study of periodic Schr\"odinger operators; 
see for instance \cite{GMS91}, \cite{RR06}, and the references therein. 
Note that in this case it is important to have specific information on some integral kernels 
related to the inverses of the operators involved. 
In abstract terms, this amounts to inverse-closedness of a Banach or even Fr\'echet algebra 
of the type considered in Corollary~\ref{last} below. 

The paper is organized as follows. 
Section~\ref{Sect1} is devoted to some preliminaries on symmetric involutive Banach algebras 
and covariance algebras of $C^*$-dynamical systems.
In Section~\ref{Sect2} we introduce the main classes of operator-valued integral kernels 
and we investigate the relationship between these kernels and the covariance algebras 
of certain $C^*$-dynamical systems. 
In this section we also obtain our main inverse-closed algebras of integral operators 
on vector-valued functions on locally compact groups 
(Theorem~\ref{main}). 
In Section~\ref{Sect3} we provide a method for constructing 
larger inverse-closed algebras of integral operators, 
and the corresponding result (Theorem~\ref{kurbatov}) applies 
in the situation that we encountered in our earlier paper \cite{BB12} 
in connection with Weyl-Pedersen calculus for unitary irreducible representations of nilpotent Lie groups. 
Finally, certain symmetry groups of our inverse-closed algebras of integral operators are studied 
in Section~\ref{Sect5}, and some of their special features are established 
in the Lie group setting, with motivation coming fr
om the recent results of \cite{BG13} and \cite{BB13}. 
The proof of Corollary~\ref{leptin2} is given in Appendix~A. 

\subsection*{General notation}
For any topological spaces $X$ and $Y$ we denote by $\Cc(X,Y)$ the set of all continuous maps from $X$ into $Y$. 
If moreover $X$ and $Y$ are smooth (maybe infinite-dimensional) manifolds, 
then $\Ci(X,Y)$ stands for the subset of $\Cc(X,Y)$ 
consisting of smooth maps. 

For any set $S$ and any Banach space $\Yc$ with the norm $\Vert\cdot\Vert_{\Yc}$, 
we denote by $\ell^\infty(S,\Yc)$ the Banach space 
consisting of all bounded functions $\phi\colon S\to\Yc$ with the norm $\Vert\phi\Vert_\infty:=\sup\limits_S\Vert\phi(\cdot)\Vert_{\Yc}$. 
If $\Yc=\CC$ then we denote simply $\ell^\infty(S):=\ell^\infty(S,\CC)$. 

If $\Ac$ is any associative complex algebra and we define $\Ac_1:=\Ac$ if there exists an unit element $\1\in\Ac$ 
and $\Ac_1:=\CC\1\dotplus \Ac$ (the unitized algebra of $\Ac$) otherwise, 
then for every element $a\in\Ac$ we define its spectrum as the set of all $z\in\CC$ for which 
$z\1-a$ is invertible in $\Ac_1$. 

For any involutive Banach algebra $\Ac$ we denote by $C^*(\Ac)$ its enveloping $C^*$-algebra 
as in \cite[Def. 10.1.10]{Pl01} (or \cite[Def. 10.4]{FD88}).

\section{Preliminaries on symmetric involutive Banach algebras}\label{Sect1}

We begin by recalling the basic notions of symmetric algebra and locally compact group. 
See for instance \cite{Bi10} for a self-contained account of various characterizations of the symmetric algebras in the setting of locally convex algebras with continuous inversion. 

\begin{definition}\label{sym}
\normalfont
An involutive Banach algebra $\Bc$ (with continuous involution) is \emph{symmetric} if for every $b\in\Bc$ the spectrum of $b^*b$ is contained in $[0,\infty)$.

A locally compact group $G$ is said to be \emph{rigidly symmetric} 
 if for every $C^*$-algebra $\Ac$ the projective tensor product $L^1(G)\hotimes\Ac$ is a symmetric Banach algebra. 
If merely $L^1(G)$ is assumed to be a symmetric Banach algebra, then we say that $G$ is a \emph{symmetric group}. 
\end{definition}

It is still unknown whether every symmetric group is rigidly symmetric. 
By \cite[Cor.~6]{Po92}, nilpotent locally compact groups are rigidly symmetric. 
Moreover, by \cite[Th.~1]{LP79}, also compact groups are rigidly symmetric. 

\begin{definition}
\normalfont
For any unital complex algebra $\Bc$ we denote by $\Bc^\times$ the group of invertible elements in $\Bc$ 
and by $\1$ its unit element, 
unless $\Bc$ is realized as an unital algebra of operators on some Hilbert space $\Hc$, 
when we denote by $\id_{\Hc}$ the identity operator. 
A unital subalgebra $\Ac$  of $\Bc$ is \emph{inverse-closed} if and only if $\Ac^\times=\Ac\cap\Bc^\times$. 
Note that we always have $\Ac^\times\subseteq\Ac\cap\Bc^\times$. 
\end{definition}

\begin{proposition}\label{univ}
If $\Ac$ is any involutive symmetric unital Banach algebra  
with its canonical homomorphism $\rho_0\colon\Ac\to C^*(\Ac)$ into its universal $C^*$-algebra, 
then $\rho_0(\Ac)$ is an inverse-closed subalgebra of $C^*(\Ac)$. 
\end{proposition}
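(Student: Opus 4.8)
The plan is to establish the spectral permanence statement $\rho_0(\Ac)^\times=\rho_0(\Ac)\cap C^*(\Ac)^\times$ by reducing inverse-closedness to a statement about spectra and then exploiting the symmetry hypothesis. First I would recall that for a Banach $*$-algebra the canonical map $\rho_0\colon\Ac\to C^*(\Ac)$ is norm-decreasing and has dense range, and that $C^*(\Ac)$ is unital with $\rho_0(\1)$ its unit whenever $\Ac$ is unital; thus it suffices to prove that whenever $a\in\Ac$ is such that $\rho_0(a)$ is invertible in $C^*(\Ac)$, already $a$ is invertible in $\Ac$. Equivalently, writing $\sigma_{\Ac}(a)$ and $\sigma_{C^*(\Ac)}(\rho_0(a))$ for the respective spectra, the claim is that $0\notin\sigma_{C^*(\Ac)}(\rho_0(a))$ forces $0\notin\sigma_{\Ac}(a)$; since $\sigma_{C^*(\Ac)}(\rho_0(a))\subseteq\sigma_{\Ac}(a)$ always holds, the real content is the reverse inclusion at the point $0$.

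The key step is to use the symmetry of $\Ac$ to control the spectrum of self-adjoint elements. For a self-adjoint $b=b^*\in\Ac$, symmetry of $\Ac$ (together with the standard consequences of symmetry, e.g.\ as in the characterizations surveyed in \cite{Bi10} or \cite{Pl01}) gives $\sigma_{\Ac}(b)\subseteq\RR$, and moreover the spectral radius of $b$ in $\Ac$ equals the $C^*$-seminorm that defines $C^*(\Ac)$; consequently $\sigma_{\Ac}(b)=\sigma_{C^*(\Ac)}(\rho_0(b))$ for every self-adjoint $b$. For a general $a\in\Ac$ with $\rho_0(a)$ invertible in $C^*(\Ac)$, I would then consider $a^*a$, which is self-adjoint in $\Ac$; since $\rho_0(a^*a)=\rho_0(a)^*\rho_0(a)$ is invertible and positive in $C^*(\Ac)$, its spectrum there is a subset of $(0,\infty)$, hence by the previous sentence $\sigma_{\Ac}(a^*a)\subseteq(0,\infty)$ as well, so $a^*a$ is invertible in $\Ac$. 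Symmetrically $aa^*$ is invertible in $\Ac$. Invertibility of both $a^*a$ and $aa^*$ in $\Ac$ yields invertibility of $a$ in $\Ac$: from $(a^*a)^{-1}$ one gets a left inverse $(a^*a)^{-1}a^*$ of $a$, and from $(aa^*)^{-1}$ a right inverse $a^*(aa^*)^{-1}$, and a two-sided left-and-right inverse is a genuine inverse.

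The main obstacle is the identity $\sigma_{\Ac}(b)=\sigma_{C^*(\Ac)}(\rho_0(b))$ for self-adjoint $b$, which is exactly the point where symmetry is indispensable; without it, $\rho_0$ could kill spectral information or the spectral radius formula could fail for self-adjoint elements, and the argument collapses. Concretely I would derive it by showing that in a symmetric unital Banach $*$-algebra the Pt\'ak function $\rho(b^*b)^{1/2}$ is submultiplicative and coincides with the spectral radius on self-adjoint elements, so that the enveloping $C^*$-seminorm is precisely $a\mapsto\rho(a^*a)^{1/2}$; then for self-adjoint $b$ one has $\|\rho_0(b)\|=\rho(b^2)^{1/2}=\rho_{\Ac}(b)$, and a Neumann-series / holomorphic functional calculus argument on $z\1-b$ for $z\notin\RR$ together with this norm equality pins down $\sigma_{\Ac}(b)$ as the spectrum of $\rho_0(b)$. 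Once this is in hand the rest is the routine reduction via $a^*a$ and $aa^*$ sketched above. One should also note at the outset the trivial inclusion $\rho_0(\Ac)^\times\subseteq\rho_0(\Ac)\cap C^*(\Ac)^\times$, so that only the displayed reverse direction needs proof.
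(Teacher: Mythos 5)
The paper does not actually prove this proposition: it simply refers to \cite[Prop.~7.5]{Bi10}, where a more general statement is established for continuous inverse algebras with involution. Your proposal reconstructs the classical self-contained argument (essentially Pt\'ak's theory of hermitian Banach $*$-algebras), and its architecture is sound: reduce invertibility of $a$ in $\Ac$ to invertibility of $a^*a$ and $aa^*$ via the left/right-inverse trick, use symmetry to place $\sigma_{\Ac}(a^*a)$ in $[0,\infty)$, and use the identity between the enveloping $C^*$-seminorm and the Pt\'ak function $a\mapsto\rho_{\Ac}(a^*a)^{1/2}$ to transfer invertibility of $\rho_0(a^*a)$ back to $\Ac$. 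This is how the result is proved in the standard references, so relative to the paper you lose nothing and gain a self-contained proof; the initial reduction (it suffices that invertibility of $\rho_0(a)$ in $C^*(\Ac)$ forces invertibility of $a$ in $\Ac$, even though $\rho_0$ need not be injective) is also handled correctly.

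One step is asserted faster than your own preparation justifies. From $\rho_{\Ac}(b)=\Vert\rho_0(b)\Vert$ for self-adjoint $b$ you conclude ``consequently $\sigma_{\Ac}(b)=\sigma_{C^*(\Ac)}(\rho_0(b))$''; but equality of the spectral radii of $b$, or even of all real translates $b-t\1$, does not force equality of two nested compact real spectra --- the sets $\{0,1,2\}$ and $\{0,2\}$ have the same farthest-point distance from every $t\in\RR$. Likewise, the Neumann-series argument for $z\notin\RR$ only yields reality of $\sigma_{\Ac}(b)$; it cannot detect a point of $\sigma_{\Ac}(b)$ missing from $\sigma_{C^*(\Ac)}(\rho_0(b))$, because the relevant resolvent bound is in the $C^*$-seminorm, which is dominated by (not equivalent to) the norm of $\Ac$. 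The gap is easy to close in either of two ways: apply the spectral-radius identity to real polynomials $q(b)$ and separate a putative point $s_0\in\sigma_{\Ac}(b)\setminus\sigma_{C^*(\Ac)}(\rho_0(b))$ by Weierstrass approximation; or, more economically --- and this is all your invertibility argument actually needs --- apply it to the single self-adjoint element $M\1-h$ with $h=a^*a$ and $M=\rho_{\Ac}(h)$. Indeed, if $\rho_0(h)$ is invertible and positive then $\min\sigma_{C^*(\Ac)}(\rho_0(h))=\delta>0$, whence $\rho_{\Ac}(M\1-h)=\Vert M\1-\rho_0(h)\Vert=M-\delta<M$, so $0\notin\sigma_{\Ac}(h)$ and $h$ is invertible in $\Ac$. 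With that repair the proof is complete.
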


\begin{proof}
See for instance \cite[Prop.~7.5]{Bi10} for a more general statement. 
\end{proof}

\begin{corollary}\label{reg}
Let $G$ be any locally compact group with the left regular representation 
$\lambda\colon L^1(G)\to \Bc(L^2(G))$. 
If $G$ is amenable and symmetric, 
then the unitization of the algebra of convolution operators 
$\CC\1+\lambda(L^1(G))$ is an inverse-closed subalgebra of $\Bc(L^2(G))$. 
\end{corollary}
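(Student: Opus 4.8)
The plan is to apply Proposition~\ref{univ} to the involutive Banach algebra $\Ac:=\CC\1\dotplus L^1(G)$, the unitization of the group algebra, and then to transport the inverse-closedness across the left regular representation, using amenability to identify $C^*(\Ac)$ with the right object. First I would observe that $L^1(G)$ is an involutive Banach algebra under convolution and the usual involution $f^*(x)=\overline{f(x^{-1})}\Delta(x)^{-1}$, and that by hypothesis $G$ is symmetric, i.e.\ $L^1(G)$ is a symmetric Banach algebra; it is a standard fact that the unitization of a symmetric involutive Banach algebra is again symmetric (the spectrum of $b^*b$ in the unitized algebra differs from that in $\Ac$ by at most the point $0$, which is harmless since $[0,\infty)$ is closed under adjoining $0$). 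Hence $\Ac$ is an involutive symmetric unital Banach algebra, and Proposition~\ref{univ} gives that $\rho_0(\Ac)$ is inverse-closed in the enveloping $C^*$-algebra $C^*(\Ac)$.

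Next I would identify $C^*(\Ac)$ with $\CC\dotplus C^*(G)$, where $C^*(G)$ is the group $C^*$-algebra, i.e.\ the enveloping $C^*$-algebra of $L^1(G)$: forming the universal $C^*$-algebra commutes with unitization, so the canonical map $\rho_0$ restricted to $L^1(G)$ is just the canonical embedding of $L^1(G)$ into $C^*(G)$. The key point where amenability enters is that for an amenable group the left regular representation $\lambda$ is weakly contained in the universal representation, so that $\lambda$ extends to a $*$-isomorphism $C^*(G)\xrightarrow{\ \sim\ }C^*_r(G)$ onto the reduced group $C^*$-algebra, which is by definition the norm closure of $\lambda(L^1(G))$ in $\Bc(L^2(G))$. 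Extending this to the unitizations, $\lambda$ induces a $*$-isomorphism $C^*(\Ac)\xrightarrow{\ \sim\ }\CC\id_{L^2(G)}\dotplus C^*_r(G)$, and this isomorphism carries $\rho_0(\Ac)$ onto $\CC\id_{L^2(G)}+\lambda(L^1(G))$.

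Finally I would conclude by a purely algebraic remark: inverse-closedness is preserved under $*$-isomorphisms of the ambient $C^*$-algebra, since an isomorphism sends invertible elements to invertible elements and intertwines the two subalgebras. Thus from the fact that $\rho_0(\Ac)$ is inverse-closed in $C^*(\Ac)$ we obtain that $\CC\id_{L^2(G)}+\lambda(L^1(G))$ is inverse-closed in $\CC\id_{L^2(G)}\dotplus C^*_r(G)$. Since $\CC\id_{L^2(G)}\dotplus C^*_r(G)$ is in turn inverse-closed in $\Bc(L^2(G))$ (a $C^*$-subalgebra containing the identity is always inverse-closed in the larger $C^*$-algebra, by the spectral permanence theorem for $C^*$-algebras), transitivity of inverse-closedness yields the claim. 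The only step that requires genuine care rather than bookkeeping is the use of amenability to get $C^*(G)\cong C^*_r(G)$; everything else is either cited from Proposition~\ref{univ} or follows from standard permanence properties of unitization, enveloping $C^*$-algebras, and $C^*$-subalgebras.
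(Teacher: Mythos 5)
Your proposal follows essentially the same route as the paper: amenability identifies the enveloping $C^*$-algebra $C^*(L^1(G))$ with $C^*_r(G)$ realized on $L^2(G)$ via $\lambda$, Proposition~\ref{univ} then gives inverse-closedness of the unitized image there, and spectral permanence of unital $C^*$-subalgebras finishes the argument. The only point to tighten is your parenthetical justification that the unitization of a symmetric Banach $*$-algebra is again symmetric: for a general element $b=z\1+a$ of the unitization the spectrum of $b^*b$ is not controlled by spectra of elements $a^*a$ with $a\in L^1(G)$ in the way you indicate, so one should instead invoke the Shirali--Ford theorem (symmetry is equivalent to hermiticity, which passes trivially to the unitization) or, as the paper implicitly does, cite a version of Proposition~\ref{univ} valid for not necessarily unital symmetric algebras.
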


\begin{proof}
Since $G$ is amenable, it follows by \cite[Th. 4.21]{Pa88} or \cite[Th. A.18]{Wi07} that 
the enveloping $C^*$-algebra of $L^1(G)$ is the closure of $\lambda(L^1(G))$ 
in the norm operator topology of $\Bc(L^2(G))$, which is just the reduced group $C^*$-algebra of $G$. 
That is, $C^*(L^1(G))=C^*_r(G)$
and $\lambda\colon L^1(G)\to C^*_r(G)$ is the corresponding canonical homomorphism. 
Then Proposition~\ref{univ} implies that $\CC\1+\lambda(L^1(G))$ 
is an inverse-closed subalgebra of the $C^*$-algebra $\CC\1+C^*_r(G)$ 
which is in turn inverse-closed in $\Bc(L^2(G))$ 
since every unital $C^*$-algebra is inverse-closed in any larger $C^*$-algebra. 
Consequently $\CC\1+\lambda(L^1(G))$ is an inverse-closed subalgebra of $\Bc(L^2(G))$, 
and this completes the proof. 
\end{proof}

It is noteworthy that if the group $G$ is assumed to be rigidly symmetric, 
rather than merely symmetric, in Corollary~\ref{reg}, then the conclusion holds true for the regular representation in spaces 
of vector-valued functions $L^2(G,\Hc_0)$, where $\Hc_0$ is an arbitrary Hilbert space.  

Another consequence of Proposition~\ref{univ} is Lemma~\ref{biller} below, 
which is needed in the proof of Theorem~\ref{main}. 

\begin{definition}\label{cov}
\normalfont
A \emph{$C^*$-dynamical system} $(\Ac,G,\alpha)$ consists of a $C^*$-algebra~$\Ac$ endowed with a continuous action of a locally compact group $G$ by automorphisms of~$\Ac$, 
$$\alpha\colon G\times\Ac\to\Ac,\quad (x,a)\mapsto\alpha_x(a).$$
The corresponding \emph{covariance algebra} $L^1(G,\Ac;\alpha)$ is the involutive associative Banach algebra obtained from the space of equivalence classes of Bochner integrable $\Ac$-valued functions on $G$ with the multiplication defined by 
\begin{equation}\label{cov_eq1}
(f\star h)(x)=\int\limits_G f(y)\alpha_y(h(y^{-1}x))\de y 
\end{equation}
and the involution 
\begin{equation}\label{cov_eq2}
f^*(x)=\Delta(x^{-1})\alpha_{x}(f(x^{-1})^*)
\end{equation}
for $f,h\in L^1(G,\Ac;\alpha)$ and almost every $x\in G$. 
Here $\Delta\colon G\to(0,\infty)$ is the modular function of~$G$. 
\end{definition}

\begin{definition}
\normalfont
Let $(\Ac,G,\alpha)$ be a $C^*$-dynamical system and  
$\pi\colon\Ac\to\Bc(\Hc_0)$ be a faithful $*$-representation of~$\Ac$. 
The \emph{$\pi$-regular representation} of the covariance algebra $L^1(G,\Ac;\alpha)$ is the continuous $*$-representation 
$\Pi\colon L^1(G,\Ac;\alpha)\to\Bc(L^2(G,\Hc_0))$ defined by 
$$(\Pi(f)\xi)(x)=\int\limits_G \pi(\alpha_{x^{-1}}(f(y)))\xi(y^{-1}x)\de y $$
for $f\in L^1(G,\Ac;\alpha)$, $\xi\in L^2(G,\Hc_0)$, and almost every $x\in G$. 
\end{definition}

\begin{proposition}\label{leptin1}
Let $G$ be a discrete group $(\Ac,G,\alpha)$ be a $C^*$-dynamical system. 
Then there exist a $C^*$-dynamical system $(\bar\Ac,G,\bar\alpha)$ 
with trivial action $\bar\alpha$ and an isometric $*$-homomorphism 
$\theta\colon \ell^1(G,\Ac;\alpha)\to \ell^1(G,\bar\Ac;\bar\alpha)=\ell^1(G)\hotimes\bar\Ac$. 
\end{proposition}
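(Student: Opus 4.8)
The plan is to remove the twist from the multiplication \eqref{cov_eq1} in two moves: first replace $\alpha$ by an action by translations on an algebra of $\Ac$-valued functions, and then trivialise those translations by passing to a larger $C^*$-algebra in which they become inner.

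For the first move, equip the $C^*$-algebra $\Cc_b(G,\Ac)$ of bounded continuous $\Ac$-valued functions on $G$ (pointwise operations, supremum norm) with the continuous action $\tau$ of $G$ by left translations, $(\tau_y\phi)(s)=\phi(y^{-1}s)$, and consider the map $\theta_0\colon L^1(G,\Ac;\alpha)\to L^1(G,\Cc_b(G,\Ac);\tau)$ given by
\[
(\theta_0 f)(x)(s):=\alpha_{s^{-1}}(f(x))\qquad (x,s\in G).
\]
Since every $\alpha_{s^{-1}}$ is isometric, $\|(\theta_0 f)(x)\|_\infty=\|f(x)\|$ for a.e.\ $x$, so $(\theta_0 f)(x)\in\Cc_b(G,\Ac)$, the function $x\mapsto(\theta_0 f)(x)$ is Bochner measurable, and $\theta_0$ preserves the $L^1$-norm. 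A direct computation — in which the factor $\alpha_{s^{-1}y}$ produced by $\tau_y$ combines with $\alpha_{s^{-1}}$ to reconstitute exactly $\alpha_{s^{-1}}\bigl((f\star h)(x)\bigr)$ — shows that $\theta_0$ intertwines the convolution of $L^1(G,\Ac;\alpha)$ with that of $L^1(G,\Cc_b(G,\Ac);\tau)$, and a similar check handles the involution \eqref{cov_eq2}. Thus $\theta_0$ is an isometric $*$-homomorphism.

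For the second move, fix a faithful $*$-representation $\pi\colon\Ac\to\Bc(\Hc_0)$. Then $\Cc_b(G,\Ac)$ acts faithfully on $L^2(G,\Hc_0)$ by pointwise multiplication, and the left regular representation $\lambda$ of $G$ (tensored with $\id_{\Hc_0}$) implements $\tau$ by conjugation. Let $\bar\Ac$ be the $C^*$-subalgebra of $\Bc(L^2(G,\Hc_0))$ generated by this copy of $\Cc_b(G,\Ac)$ together with $\lambda(G)$; on $\bar\Ac$ the action $\bar\alpha_y:=\Ad\lambda_y$ is inner, implemented by the homomorphism $y\mapsto\lambda_y$ into the unitary group of $\bar\Ac$. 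For an inner action of this form the template $F\mapsto\bigl(x\mapsto F(x)\lambda_x\bigr)$ is an isometric $*$-isomorphism of the covariance algebra onto the covariance algebra of $\bar\Ac$ carrying the \emph{trivial} action, i.e.\ onto $L^1(G)\hotimes\bar\Ac$. Composing with $\theta_0$ (and with the $*$-isometric inclusion of $\Cc_b(G,\Ac)$, hence of $\Ac$, into $\bar\Ac$) should yield the desired isometric $*$-homomorphism $\theta$; explicitly $(\theta f)(x)=\tilde\pi(f(x))\lambda_x$, where $\tilde\pi(a)$ denotes pointwise multiplication by the function $s\mapsto\pi(\alpha_{s^{-1}}(a))$.

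The delicate point — and the step I expect to cost the most care — is precisely this second move, because the implementing unitaries $\lambda_x$ depend only strongly, not norm-, continuously on $x$; one must therefore choose $\bar\Ac$ (in general a non-separable $C^*$-algebra) and, if necessary, a suitable variant of the untwisting map, so that $x\mapsto(\theta f)(x)$ is genuinely Bochner measurable into $\bar\Ac$ and $\theta$ really lands in $L^1(G)\hotimes\bar\Ac$ rather than merely in some weakly measurable completion. I would address this by verifying all the algebraic identities first on the dense subalgebra $\Cc_c(G,\Ac)$, where the functions involved are manifestly well behaved, and only then extending $\theta$ to $L^1(G,\Ac;\alpha)$ by continuity, using that $\theta_0$ is isometric. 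Once the target $C^*$-algebra is pinned down, the remaining verifications that $\theta$ is isometric and multiplicative are exactly the routine computations indicated above.
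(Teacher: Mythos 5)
Your argument is essentially the paper's: the paper invokes the existence of a covariant pair $(\pi,V)$ with $\pi$ faithful (citing Pedersen 7.7.1 — in practice exactly the regular covariant representation you construct explicitly via your two moves), sets $\bar\Ac=C^*(\Ac\cup V(G))$ with trivial action, and defines $\theta(f)=f(\cdot)V(\cdot)$, which coincides with your composite $(\theta f)(x)=\tilde\pi(f(x))\lambda_x$. The Bochner-measurability issue for $x\mapsto f(x)V(x)$ that you rightly single out as the delicate point is not addressed in the paper's proof either — it simply asserts that $\theta$ is an isometry into $L^1(G,\bar\Ac;\bar\alpha)$ and defers to Leptin's Satz 6 — so your extra caution there goes beyond, rather than falls short of, the published argument (though note that your proposed remedy of checking on $\Cc_c(G,\Ac)$ and extending by continuity does not by itself settle it, since the question is whether $\theta$ lands in the target already on elementary tensors $\phi\otimes a$).
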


\begin{proof}
We may assume without loss of generality that there exist 
a complex Hilbert space $\Hc_0$ and a continuous unitary representation $ V\colon G\to\Bc(\Hc_0)$ with 
$\Ac\subseteq\Bc(\Hc_0)$ and $\alpha_x(a)= V(x)a V(x)^{-1}$ for every $a\in\Ac$ and $x\in G$ 
(see for instance \cite[7.7.1]{Pe79} or \cite[Ex.~2.14]{Wi07}). 
Let $\bar\Ac$ be the $C^*$-algebra generated by $\Ac\cup V(G)$ 
with the trivial action $\bar\alpha$ of $G$, and define 
$$\theta \colon \ell^1(G,\Ac;\alpha)\to \ell^1(G,\bar\Ac;\bar\alpha),\quad 
f\mapsto f(\cdot) V(\cdot).$$
Since $ V(x)\in\Bc(\Hc_0)$ is a unitary operator for every $x\in G$, 
it follows at once that the mapping $\theta$ is an isometry. 
Moreover, for $f,h\in \ell^1(G,\Ac;\alpha)$ we have 
$$\begin{aligned}
(\theta(f)\star\theta(h))(x)
&=\sum\limits_{y\in G}(\theta(f))(y)(\theta(h))(y^{-1}x)\\
&=\sum\limits_{y\in G} f(y) V(y)h(y^{-1}x) V(y^{-1}x)\\
&=\big(\sum\limits_{y\in G} f(y) V(y)h(y^{-1}x) V(y)^{-1} \big) V(x)\\
&=\big(\sum\limits_{y\in G} f(y)\alpha_y(h(y^{-1}x))\big) V(x) \\
&=(f\star h)(x) V(x) \\
&=(\theta(f\star h))(x).
\end{aligned}$$
Also, 
$$\begin{aligned}
\theta(f)^*(x)& =\Delta(x^{-1})((\theta(f))(x^{-1}))^*
=\Delta(x^{-1})(f(x^{-1}) V(x^{-1}))^* \\
&=\Delta(x^{-1}) V(x)f(x^{-1})^*
=\Delta(x^{-1})\alpha_x(f(x^{-1})^*) V(x)
=(\theta(f^*))(x),
\end{aligned}$$
and this concludes the proof. 
\end{proof}

The result of the above Proposition~\ref{leptin1} is actually a by-product of the proof of 
\cite[Satz 6]{Le68}.  
Note that the isometric $*$-homomorphism $\theta$ in the statement need not be surjective. 

\begin{corollary}\label{leptin2}
Let $G$ be any locally compact group with its underlying discrete group denoted by~$G_d$. 
Let $(\Ac,G,\alpha)$ be any $C^*$-dynamical system. 
If the group $G_d$ is rigidly symmetric, 
then the covariance algebra $L^1(G,\Ac;\alpha)$ is a symmetric involutive Banach algebra. 
\end{corollary}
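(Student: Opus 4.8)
The plan is to reduce the statement to the defining property of rigidly symmetric groups via the isometric $*$-embedding of Proposition~\ref{leptin1}, and then to invoke the fact that symmetry is inherited by closed $*$-subalgebras.

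First I would apply Proposition~\ref{leptin1} to obtain a $C^*$-algebra $\bar\Ac$, a trivial action $\bar\alpha$, and an isometric $*$-homomorphism $\theta\colon L^1(G,\Ac;\alpha)\to L^1(G,\bar\Ac;\bar\alpha)=L^1(G)\hotimes\bar\Ac$. Since $\bar\Ac$ is a $C^*$-algebra and $G$ is rigidly symmetric, Definition~\ref{sym} shows that $\Bc:=L^1(G)\hotimes\bar\Ac$ is a symmetric involutive Banach algebra. As $\theta$ is isometric and $L^1(G,\Ac;\alpha)$ is complete, its range $\Bc_0:=\theta(L^1(G,\Ac;\alpha))$ is a \emph{closed} $*$-subalgebra of $\Bc$, onto which $\theta$ is an isometric $*$-isomorphism. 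Hence it suffices to prove that $\Bc_0$ is symmetric.

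For this I would rely on the classical fact that a closed $*$-subalgebra of a symmetric involutive Banach algebra is again symmetric. The cleanest mechanism is the spectral-radius description of symmetry: after passing to the equivalent $*$-invariant algebra norm $a\mapsto\max(\|a\|,\|a^*\|)$ on $\Bc$, one may assume the involution isometric, and then $\Bc$ is symmetric if and only if $r(b)\le r(b^*b)^{1/2}$ for every $b\in\Bc$, where $r(\cdot)$ denotes the spectral radius. (Here the direct implication uses that symmetry gives $\sigma(h^2)=\sigma(h^*h)\subseteq[0,\infty)$, hence $\sigma(h)\subseteq\RR$, for self-adjoint $h$, combined with the Shirali--Ford--Pt\'ak theory of hermitian Banach $*$-algebras.) Since $r(b)=\lim_n\|b^n\|^{1/n}$ and $r(b^*b)=\lim_n\|(b^*b)^n\|^{1/n}$ are intrinsic to the subalgebra, the inequality $r(b)\le r(b^*b)^{1/2}$ passes from $\Bc$ to $\Bc_0$; thus $\Bc_0$, and hence $L^1(G,\Ac;\alpha)$, is symmetric.

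Proposition~\ref{leptin1} supplies the routine parts — the multiplicativity and $*$-compatibility of $\theta$ and its isometry — and completeness of covariance algebras is built into Definition~\ref{cov}; so the one point deserving real care, and what I would regard as the main obstacle, is the subalgebra permanence for $\Bc_0\subseteq\Bc$. One must keep the argument consistent with the unitization convention used here to define the spectrum, and, should one prefer a direct comparison of spectra to the spectral-radius route, one must also be attentive to the case in which $\Bc_0$ happens to carry a unit distinct from that of $\Bc$. Everything else is bookkeeping.
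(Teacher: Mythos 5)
Your proposal is correct and follows exactly the paper's own route: embed $L^1(G,\Ac;\alpha)$ isometrically into $L^1(G)\hotimes\bar\Ac$ via Proposition~\ref{leptin1}, note the latter is symmetric by rigid symmetry of $G$, and conclude by the permanence of symmetry under passage to closed $*$-subalgebras (which the paper simply cites as \cite[Prop.~7.10]{Bi10}, whereas you sketch the standard P\'atk-style spectral-radius argument for it). No gaps.
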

The proof  of Corollary~\ref{leptin2} is given in Appendix~A, since it requires extra notions and notation that 
are not in the main line of the paper.

\section{Integral operators on vector-valued functions}\label{Sect2}

\begin{setting}\label{k0}
\normalfont
Throughout what follows in the present paper, 
unless otherwise mentioned, 
we shall work in a setting involving the following basic ingredients: 
\begin{enumerate}
\item $G$ stands for a unimodular locally compact topological group with a fixed Haar measure denoted by $\de x$.
\item $\Dc_0$ is a unital $C^*$-algebra of operators on some complex Hilbert space $\Hc_0$. 
\end{enumerate}
\end{setting}

\begin{notation}\label{k0.5}
\normalfont
For every complex Banach space $\Yc$ and every $p\in[1,\infty)$ 
we denote by $L^p(G,\Yc)$ the Banach space consisting of the equivalence classes of $\Yc$-valued, Bochner $p$-integrable functions on $G$ 
(see for instance \cite{Ha53}, \cite{EH53}, and \cite{El58}). 
If $\Yc=\CC$, we denote simply $L^p(G,\CC)=L^p(G)$, as usual. 
\end{notation}

\subsection*{Algebras of operator-valued integral kernels}
In the following definition we introduce several objects of major importance 
for the subsequent developments in the present paper, 
namely some spaces of operator-valued integral kernels 
on locally compact groups, operations on them along with natural norms, 
and also the corresponding integral operators. 
The basic properties of these objects which will be needed below 
(the fact that we obtain a normed algebra of integral kernels, 
or that the corresponding integral operators are bounded etc.) 
are contained in Proposition~\ref{mult} below. 

\begin{definition}\label{k1}
\normalfont 
Pick an arbitrary linear subspace $\Fc\subseteq L^1(G)$. 
If $K\colon G\times G\to \Dc_0$ is a Bochner measurable function, 
then we define $\Vert K\Vert_{\Kern_{\Fc}( G,\Dc_0)}$ as the infimum of the norms $\Vert\beta\Vert_{L^1( G)}$ 
for $\beta\in \Fc$ such that $\Vert K(x,y)\Vert\le\vert\beta(xy^{-1})\vert$ for a.e. $x,y\in G$. 
If no function $\beta$ satisfies these conditions, 
then we set $\Vert K\Vert_{\Kern_{\Fc}( G,\Dc_0)}=\infty$. 
We introduce the space of Bochner measurable functions 
$$\Kern_{\Fc}( G,\Dc_0):=\{K\colon G\times G\to \Dc_0\mid
\Vert K\Vert_{\Kern_{\Fc}( G,\Dc_0)}<\infty\}.$$
If $\Fc=L^1(G)$, then we will omit $\Fc$ from the notation $\Kern_{\Fc}( G,\Dc_0)$ 
and we also introduce the linear mapping
\begin{equation}\label{intop}
\Kern( G,\Dc_0)\to\Bc(L^2( G,\Hc_0)),\quad K\mapsto T_K,
\end{equation}
where $T_K$ is the operator on $L^2( G,\Hc_0)$ defined by the integral kernel~$K$, that is, 
$$(T_Kf)(x)=\int\limits_G K(x,y)f(y)\de y  $$
for every $f\in L^2( G,\Hc_0)$. 
We denote by $\star$ both the usual composition of integral kernels, that is, 
$$(K_1\star K_2)(x,z)=\int\limits_ G K_1(x,y)K_2(y,z)\de y\text{ for }x,z\in G,$$
and the convolution operation 
$$(\beta_1\star\beta_2)(x)=\int\limits_G\beta_1(xy^{-1})\beta_2(y)\de y\text{ for }x\in G $$ 
for $\beta_1,\beta_2\in L^1( G)$. 
Moreover, we denote $K^*(x,y):=K(y,x)^*$ for any integral kernel 
$K\colon  G\times G\to \Dc_0$. 
\end{definition}

We now record a few basic properties of the objects introduced in Definition~\ref{k1}. 
The mapping~\eqref{intop} will be called sometimes the \emph{canonical representation} of 
the associative algebra $\Kern( G,\Dc_0)$, 
although it implicitly depends on the realization of the $C^*$-algebra $\Dc_0$ as an operator algebra on $\Hc_0$.

\begin{proposition}\label{mult}
The space of $\Dc_0$-valued integral kernels 
$\Kern( G,\Dc_0)$ equipped with the above defined product and involution 
is an involutive associative Banach algebra 
with the faithful contractive $*$-representation by integral operators given by  
the mapping~\eqref{intop}.  
\end{proposition}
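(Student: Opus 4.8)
The plan is to verify, in turn, the three assertions bundled in the statement: that the triple $(\Kern(G,\Dc_0),\star,{}^*)$ is an involutive associative Banach algebra, that $K\mapsto T_K$ is a contractive $*$-representation, and that this representation is faithful. The unifying observation driving all three verifications is that $\Vert K\Vert_{\Kern(G,\Dc_0)}$ is controlled by scalar convolution through a dominating function $\beta\in L^1(G)$, so that every inequality for kernels is reduced to the corresponding (elementary) inequality for $L^1(G)$ under convolution, where unimodularity of $G$ keeps the bookkeeping clean.

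First I would establish that $\Kern(G,\Dc_0)$ is a normed space: given $K_1,K_2$ with dominating functions $\beta_1,\beta_2$, the triangle inequality $\Vert(K_1+K_2)(x,y)\Vert\le\vert\beta_1(xy^{-1})\vert+\vert\beta_2(xy^{-1})\vert$ shows $\Vert K_1+K_2\Vert\le\Vert\beta_1\Vert_{L^1}+\Vert\beta_2\Vert_{L^1}$, and taking infima gives subadditivity; homogeneity and the fact that $\Vert K\Vert=0$ forces $K=0$ a.e.\ are immediate. For the algebra structure, if $\Vert K_i(x,y)\Vert\le\vert\beta_i(xy^{-1})\vert$ then $\Vert(K_1\star K_2)(x,z)\Vert\le\int_G\vert\beta_1(xy^{-1})\vert\,\vert\beta_2(yz^{-1})\vert\de y=(\vert\beta_1\vert\star\vert\beta_2\vert)(xz^{-1})$, so $K_1\star K_2\in\Kern(G,\Dc_0)$ with $\Vert K_1\star K_2\Vert\le\Vert\beta_1\Vert_{L^1}\Vert\beta_2\Vert_{L^1}$; passing to infima yields submultiplicativity of the norm. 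Associativity of $\star$ is Fubini, valid because the dominating $L^1$-convolutions are finite a.e. For the involution, $\Vert K^*(x,y)\Vert=\Vert K(y,x)^*\Vert=\Vert K(y,x)\Vert\le\vert\beta(yx^{-1})\vert=\vert\check\beta(xy^{-1})\vert$ where $\check\beta(g):=\beta(g^{-1})$; since $G$ is unimodular, $\Vert\check\beta\Vert_{L^1}=\Vert\beta\Vert_{L^1}$, so ${}^*$ is an isometric conjugate-linear involution, and $(K_1\star K_2)^*=K_2^*\star K_1^*$ follows by a direct change of variables. Completeness I would get by a standard argument: an absolutely summable series $\sum K_n$ has dominating functions $\beta_n$ with $\sum\Vert\beta_n\Vert_{L^1}<\infty$, and $\beta:=\sum\vert\beta_n\vert\in L^1(G)$ dominates the pointwise-a.e.\ limit $K:=\sum K_n$, giving $K\in\Kern(G,\Dc_0)$ and convergence in norm.

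Next, for the canonical representation, fix $K$ with dominating $\beta$. For $f\in L^2(G,\Hc_0)$ one has $\Vert(T_Kf)(x)\Vert_{\Hc_0}\le\int_G\vert\beta(xy^{-1})\vert\,\Vert f(y)\Vert_{\Hc_0}\de y=(\vert\beta\vert\star\Vert f(\cdot)\Vert_{\Hc_0})(x)$, and by Young's inequality $L^1\star L^2\subseteq L^2$ this gives $\Vert T_Kf\Vert_{L^2(G,\Hc_0)}\le\Vert\beta\Vert_{L^1}\Vert f\Vert_{L^2(G,\Hc_0)}$; taking the infimum over $\beta$ yields $\Vert T_K\Vert\le\Vert K\Vert_{\Kern(G,\Dc_0)}$, i.e.\ contractivity. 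Linearity of $K\mapsto T_K$ is clear, multiplicativity $T_{K_1\star K_2}=T_{K_1}T_{K_2}$ is Fubini (again legitimate by the $L^1$-domination), and $T_{K^*}=(T_K)^*$ is the computation $\scalar{T_{K^*}f}{g}=\int\!\!\int\scalar{K(y,x)^*f(y)}{g(x)}\de y\,\de x=\int\!\!\int\scalar{f(y)}{K(y,x)g(x)}\de x\,\de y=\scalar{f}{T_Kg}$, using unimodularity to keep the Haar measure symmetric.

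Finally, faithfulness: suppose $T_K=0$. Then for all $f,g\in L^2(G,\Hc_0)$ we have $\int_G\int_G\scalar{K(x,y)f(y)}{g(x)}\,\de y\,\de x=0$. Choosing $f,g$ supported in small neighborhoods and concentrated at prescribed vectors $v,w\in\Hc_0$, a Lebesgue-differentiation / approximate-identity argument forces $\scalar{K(x,y)v}{w}=0$ for a.e.\ $(x,y)$ and all $v,w$ in a countable dense set, hence $K(x,y)=0$ for a.e.\ $(x,y)$, i.e.\ $K=0$ in $\Kern(G,\Dc_0)$. The main obstacle I anticipate is precisely this last step: making the pointwise recovery of $K$ from the vanishing of the bilinear form fully rigorous in the Bochner/vector-valued setting on a general (possibly non-metrizable, non-$\sigma$-finite) locally compact group requires some care with measurability and with the choice of test functions; the rest of the proof is a sequence of routine reductions to scalar convolution inequalities. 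A clean way around it is to note that $T_K=0$ implies all "matrix coefficients'' $x\mapsto K(x,\cdot)$ act trivially as operator kernels in the scalar-valued sense after pairing with vectors of $\Hc_0$, then invoke the classical uniqueness of integral kernels on $L^2(G)$.
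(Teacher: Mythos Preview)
Your proof is correct, and the algebraic verifications (submultiplicativity via convolution domination, isometric involution via unimodularity, contractivity via Young's inequality, faithfulness via test functions) are exactly the ``straightforward'' checks the paper leaves to the reader. The paper in fact cites an external reference for the bound $\Vert T_K\Vert\le\Vert K\Vert_{\Kern(G,\Dc_0)}$, whereas you supply the Young-inequality argument directly; your version is more self-contained here.

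The one genuine difference is in the completeness proof. You use the absolutely-summable-series criterion: pick dominating $\beta_n$ with $\sum\Vert\beta_n\Vert_{L^1}<\infty$, set $\beta=\sum|\beta_n|\in L^1(G)$, and observe that $\beta$ dominates the pointwise limit. The paper instead proves completeness by exhibiting an explicit isometric linear bijection
\[
\Kern(G,\Dc_0)\;\longrightarrow\;L^{\infty,1}(G\times G,\Dc_0),\qquad K\mapsto K\circ\Psi^{-1},
\]
where $\Psi(x,y)=(y,xy^{-1})$ and $L^{\infty,1}$ is the mixed-norm space with $\Vert\varphi\Vert_{\infty,1}=\int_G\Vert\varphi(\cdot,s)\Vert_\infty\,\de s$, whose completeness is classical. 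This buys more than completeness: it yields the closed formula $\Vert K\Vert_{\Kern}=\int_G\operatorname*{ess\,sup}_{t}\Vert K(st,t)\Vert\,\de s$, which in particular makes the implication $\Vert K\Vert=0\Rightarrow K=0$ transparent (something you called ``immediate'' but which otherwise needs a short subsequence-plus-Fubini argument). Your route is more elementary and avoids outside references; the paper's route identifies the underlying Banach-space structure and gives the norm in closed form.
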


\begin{proof}
It easily follows by \cite[Th. 3.8]{GW04} (see also \cite{BC08a}) that 
$$ (\forall K\in \Kern( G,\Dc_0))\quad \Vert T_K\Vert\le\Vert K\Vert_{\Kern( G,\Dc_0)}.$$ 
Moreover, it is straightforward to check that $\Kern( G,\Dc_0)$ is a an associative normed $*$-algebra 
with a faithful $*$-representation defined by the mapping~\eqref{intop}, 
which is a contractive representation by the above norm estimate. 

It remains to check that the norm of $\Kern( G,\Dc_0)$ is complete. 
To this end denote by $L^{\infty,1}(G\times G,\Dc_0)$ the space of (equivalence classes of) 
Bochner measurable functions $\varphi\colon G\times G\to \Dc_0$ 
for which 
$\Vert\varphi\Vert_{\infty,1}:=\int_G\Vert\varphi(\cdot,s)\Vert_\infty\de s<\infty$. 
Then $L^{\infty,1}(G\times G,\Dc_0)$ is a Banach space by \cite[Th. 3.1]{El58} 
(see also \cite[Th. 3.1]{EH53} and \cite{Ha53}).  
Moreover, for every Bochner measurable function $\varphi\colon G\times G\to \Dc_0$ 
we have (see for instance \cite[Prop. 2.2]{Ku01})
$$\int_G\Vert\varphi(\cdot,s)\Vert_\infty\de s=\inf\{\Vert\beta\Vert_{L^1(G)}\mid \beta\in L^1(G)\text{ and }
\Vert \varphi(t,s)\Vert\le\vert\beta(s)\vert \text{ a.e. on }G\times G\}. $$
Therefore, if we define the homeomorphism 
$$\Psi\colon G\times G\to G\times G,\quad \Psi(x,y)=(y,xy^{-1})$$
with the inverse given by $\Psi^{-1}(t,s)=(st,t)$ for all $s,t\in G\times G$, 
then we obtain a surjective isometry 
$$\Kern( G,\Dc_0)\to L^{\infty,1}(G\times G,\Dc_0),\quad K\mapsto K\circ\Psi^{-1}$$
hence $\Kern( G,\Dc_0)$ is in turn a Banach space. 
\end{proof}

\subsection*{$C^*$-dynamical systems of uniformly continuous functions}
The next remark introduces the maximal space of bounded $\Dc_0$-valued functions on $G$ 
which is continuously acted on by left translations of $G$ 
and thus gives rise to a $C^*$-dynamical system. 

\begin{remark}\label{syst}
\normalfont
Let $\RUCb(G,\Dc_0)$ be the unital $C^*$-algebra consisting of 
the right uniformly continuous bounded $\Dc_0$-valued functions on $ G$. 
That is, if we define $(\alpha_xf)(y)=f(x^{-1}y)$ for arbitrary $x,y\in G$ 
and any continuous function $f\colon G\to \Dc_0$, then 
we have $f\in\RUCb(G,\Dc_0)$ if and only if 
$\Vert f\Vert_\infty:=\sup\{\Vert f(x)\Vert\mid x\in G\}<\infty$ 
and $\lim\limits_{x\to\1}\Vert\alpha_x f-f\Vert_\infty=0$. 

Then it is clear that $(\RUCb(G,\Dc_0), G,\alpha)$ is a $C^*$-dynamical system. 
For any $f\in L^1(G,\RUCb(G,\Dc_0);\alpha)$ and $x\in G$ we have 
$f(x)\in\RUCb( G,\Dc_0)$, hence we can define 
$f(x,y):=(f(x))(y)\in \Dc_0$ for almost every $y\in G$. 
This convention is also used below in other spaces, 
for instance in equation~\eqref{syst_eq1}. 

There exists a faithful $*$-representation 
$$\pi\colon\RUCb( G,\Dc_0)\to\Bc(L^2( G,\Hc_0)),\quad 
(\pi(f)\xi)(\cdot)=f(\cdot)\xi(\cdot)$$
and the corresponding $\pi$-regular representation 
\begin{equation}\label{syst_eq0}
\Pi\colon L^1(G,\RUCb(G,\Dc_0);\alpha)\to\Bc(L^2(G,L^2( G,\Hc_0))) 
\end{equation}
can be described by the formula 
\begin{equation}\label{syst_eq1}
(\Pi(f)\xi)(x,z)
=\int\limits_ G 
\underbrace{f(y,xz)}_{\hskip15pt\in \Dc_0}
\underbrace{\xi(y^{-1}x,z)}_{\hskip20pt\in\Hc_0}\de y 
\end{equation}
for $x,z\in G$, $f\in L^1(G,\RUCb( G,\Dc_0))$, and 
$\xi\in L^2( G,L^2( G,\Hc_0))\simeq L^2( G\times G,\Hc_0)$. 
\end{remark}

\begin{proposition}\label{R}
The following assertions hold: 
\begin{enumerate}
\item\label{R_item1} 
There exists an isometric $*$-homomorphism 
$$R\colon L^1(G,\RUCb(G,\Dc_0);\alpha)\to\Kern(G,\Dc_0),
\quad (Rf)(x,y)=f(xy^{-1},x). $$
\item\label{R_item2} 
For every $K\in\Ran R$ and $x,y\in G$ we have 
$(R^{-1}K)(x,y)=K(y,x^{-1}y)$. 
\item\label{R_item4} 
If $ G$ is a discrete group, then $\Ran R=\Kern(G,\Dc_0)$. 
\item\label{R_item3} 
We have $\Ran R\supseteq \Kern(G,\Dc_0)\cap\Cc(G\times G,\Dc_0)$ 
if and only if the group $G$ is either compact or discrete. 
\end{enumerate}
\end{proposition}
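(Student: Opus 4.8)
### Proof proposal for Proposition~\ref{R}

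The plan is to work with the two homeomorphisms of $G\times G$ implicit in the formulas for $R$ and $R^{-1}$, namely $\Phi(x,y)=(xy^{-1},x)$ and its inverse $\Phi^{-1}(u,v)=(v,u^{-1}v)$, and to track how they interact with the norms and the algebraic operations. For item~\eqref{R_item1}, I would first verify the norm identity: if $f\in L^1(G,\RUCb(G,\Dc_0);\alpha)$ then $\|f(x)\|_\infty\le\beta(x)$ a.e. for $\beta(x):=\|f(x)\|_\infty\in L^1(G)$, and $\|(Rf)(x,y)\|=\|f(xy^{-1},x)\|=\|(f(xy^{-1}))(x)\|\le\|f(xy^{-1})\|_\infty=\beta(xy^{-1})$, so $Rf\in\Kern(G,\Dc_0)$ with $\|Rf\|_{\Kern}\le\|f\|_{L^1}$; conversely any admissible $\beta$ for $Rf$ dominates $\|f(\cdot)\|_\infty$ after the substitution $x=uv$, $y=v$, giving equality since $G$ is unimodular. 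Then I would check that $R$ intertwines the products \eqref{cov_eq1} and the kernel composition $\star$: substituting $z=uv$ into $\int_G(Rf)(x,t)(Rg)(t,z)\,\de t$ and using $\alpha_x(h(y^{-1}x))(z)=h(y^{-1}x,x^{-1}z)$ unwound from the covariance formula should reproduce $(R(f\star g))(x,z)$; and similarly $(Rf)^*(x,y)=(Rf)(y,x)^*=f(yx^{-1},y)^*$ must be matched against $R(f^*)$ using \eqref{cov_eq2} and unimodularity ($\Delta\equiv1$). Item~\eqref{R_item2} is then immediate once \eqref{R_item1} is established, since $\Phi^{-1}(u,v)=(v,u^{-1}v)$ forces $(R^{-1}K)(x,y)=K(\Phi^{-1}(x,y))$-style bookkeeping, i.e. the stated $(R^{-1}K)(x,y)=K(y,x^{-1}y)$.

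For item~\eqref{R_item4}, in the discrete case $\RUCb(G,\Dc_0)=\ell^\infty(G,\Dc_0)$ and the uniform-continuity condition is vacuous, so the only constraint for $f\in L^1(G,\ell^\infty(G,\Dc_0);\alpha)$ is $\sum_{x}\|f(x)\|_\infty<\infty$; given any $K\in\Kern(G,\Dc_0)$, i.e. $\|K(x,y)\|\le|\beta(xy^{-1})|$ for some summable $\beta$, the function $f:=R^{-1}K$ defined by $f(u,v):=K(v,u^{-1}v)$ has $\|f(u)\|_\infty=\sup_v\|K(v,u^{-1}v)\|\le|\beta(u)|$ (since $v(u^{-1}v)^{-1}=u$), hence lies in the covariance algebra, showing $R$ is onto. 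For item~\eqref{R_item3}, the ``if'' direction splits into the compact case (where $C(G\times G,\Dc_0)\subseteq\Kern(G,\Dc_0)$ automatically, and $R^{-1}K$ is continuous, so one checks $f(x)\in\RUCb(G,\Dc_0)$ for each $x$, which follows from uniform continuity of $K$ on the compact $G\times G$) and the discrete case (covered by \eqref{R_item4}). The ``only if'' direction is the substantive point: one must exhibit, for a noncompact nondiscrete $G$, a continuous $K\in\Kern(G,\Dc_0)$ with $R^{-1}K\notin L^1(G,\RUCb(G,\Dc_0);\alpha)$ — the natural obstruction is that $y\mapsto(R^{-1}K)(x,y)=K(y,x^{-1}y)$ can fail to be right uniformly continuous even when $K$ is jointly continuous, because right uniform continuity of $y\mapsto K(y,x^{-1}y)$ is a genuinely stronger demand than joint continuity of $K$ once $G$ is not discrete, and the required domination $\|K(y,x^{-1}y)\|\le|\beta(x)|$ can be arranged without forcing the uniform-continuity estimate; so I would take $\Dc_0=\CC$ and build $K(x,y)=\gamma(xy^{-1})\psi(x,y)$ with $\gamma\in C_c(G)$ and $\psi$ a bounded continuous function that is not uniformly continuous along the relevant slices, noting that nondiscreteness supplies a sequence $x_n\to\1$ along which the uniform norms do not converge.

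The main obstacle I expect is the ``only if'' half of item~\eqref{R_item3}: one has to produce a clean counterexample that works simultaneously for every locally compact group that is neither compact nor discrete, which means distinguishing the two failure modes (noncompactness, which lets $\beta$ have noncompact support issues, versus nondiscreteness, which is what actually breaks right uniform continuity of the slice functions) and handling them uniformly. A careful formulation would probably argue: if $G$ is not discrete pick $x_n\to\1$ with $x_n\ne\1$ and use a bump-type construction so that $\|(R^{-1}K)(\cdot)\|$ oscillates; if $G$ is not compact but is discrete we are in case \eqref{R_item4} and there is nothing to prove; the remaining case is $G$ neither compact nor discrete, where both phenomena are available. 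All the remaining verifications — the isometry claim, the homomorphism property, and the compact/discrete ``if'' directions — are routine changes of variables using unimodularity of $G$, so the write-up should concentrate on setting up $\Phi$, $\Phi^{-1}$ carefully and then on the counterexample.
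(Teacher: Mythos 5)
Your treatment of items (1), (2), (4) and of the ``if'' half of (3) coincides in substance with the paper's: the same change of variables $\Phi(x,y)=(xy^{-1},x)$, the same two-sided norm estimate giving the isometry, the observation that $\RUCb(G,\Dc_0)=\ell^\infty(G,\Dc_0)$ in the discrete case, and uniform continuity on the compact $G\times G$ in the compact case. Those parts are fine and routine, as you say.

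The genuine gap is in the ``only if'' half of (3), which you correctly identify as the substantive point but do not actually prove. Your plan is to exhibit, for every locally compact $G$ that is neither compact nor discrete, a continuous kernel $K\in\Kern(G,\Dc_0)$ with $R^{-1}K\notin L^1(G,\RUCb(G,\Dc_0);\alpha)$, the obstruction being a bounded continuous function that fails to be right uniformly continuous. But the existence of such a function on an arbitrary noncompact, nondiscrete locally compact group is exactly the nontrivial content here: it is the theorem of Comfort--Ross and Kister that a locally compact group satisfies $\RUCb(G,\CC)=\Cc(G,\CC)\cap\ell^\infty(G,\CC)$ if and only if it is compact or discrete, and your ``bump-type construction, noting that nondiscreteness supplies a sequence $x_n\to\1$'' is an assertion, not an argument (note in particular that nondiscreteness alone does not break uniform continuity --- on a compact nondiscrete group every continuous function is uniformly continuous --- so one must genuinely combine noncompactness and nondiscreteness, e.g.\ by placing shrinking left-translated bumps along a sequence escaping every compact set and verifying local finiteness of their supports). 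The paper avoids constructing anything: it takes $K(x,y)=\phi(xy^{-1})\psi(x)$ with $\phi\in\Cc(G)\cap L^1(G)$, $\int_G\phi\ne0$, and $\psi\in\Cc(G,\Dc_0)\cap\ell^\infty(G,\Dc_0)$ arbitrary, notes that $R^{-1}K=\phi\otimes\psi$, and deduces from the hypothesis that $\psi\in\RUCb(G,\Dc_0)$; hence $\RUCb=\Cc\cap\ell^\infty$, and the cited classical results then force $G$ to be compact or discrete. To complete your version you would either have to carry out the counterexample construction in full generality (essentially reproving Kister's theorem) or, more economically, run the paper's reduction and quote the classical characterization.
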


\begin{proof}
First note that for $f\in L^1(G,\RUCb( G,\Dc_0);\alpha)$ and $x,y\in G$ we have 
$$\Vert Rf(x,y)\Vert_{\Dc_0}
=\Vert f(xy^{-1},x)\Vert_{\Dc_0}
\le\Vert f(xy^{-1})\Vert_{\RUCb( G,\Dc_0)}.$$
Since $\Vert f(\cdot)\Vert_{\RUCb( G,\Dc_0)}\in L^1( G)$, we see that 
$Rf\in\Kern( G,\Dc_0)$ and 
$$\Vert Rf\Vert_{\Kern( G,\Dc_0)}
\le\int\limits_ G\Vert f(z)\Vert_{\RUCb( G,\Dc_0)}\de z
=\Vert f\Vert_{L^1(G,\RUCb( G,\Dc_0);\alpha)}. $$
Moreover, since the group $ G$ is unimodular, we obtain by equation~\eqref{cov_eq2} 
that for all $a,b\in G$ we have $f^*(a,b)=f(a^{-1},a^{-1}b)^*$, 
and therefore for all $x,y\in G$ we have 
$$\begin{aligned}
R(f^*)(x,y)
&=f^*(xy^{-1},x)
=f((xy^{-1})^{-1},(xy^{-1})^{-1}x)^*
=f(yx^{-1},y)^* \\
&=(Rf)^*(x,y)
\end{aligned}$$
using the involution in $\Kern(G,\Dc_0)$. 
When applied for the $C^*$-dynamical system $(\RUCb( G,\Dc_0), G,\alpha)$, 
formula \eqref{cov_eq1} gives the following expression for the product in 
$L^1(G,\RUCb( G,\Dc_0);\alpha)$: 
$$(f\star h)(x,z)=\int\limits_ G f(y,z)h(y^{-1}x,y^{-1}z)\de y.$$
Therefore we obtain 
\allowdisplaybreaks
\begin{align}
(R(f\star h))(x,z)
&=(f\star h)(xz^{-1},x) \nonumber\\
&=\int\limits_ G f(y,x)h(y^{-1}xz^{-1},y^{-1}x)\de y \nonumber\\
&=\int\limits_ G f(xv^{-1},x)h(vz^{-1},v)\de v \nonumber\\
&=\int\limits_ G (Rf)(x,v)(Rh)(v,z)\de v \nonumber\\
&=(Rf\star Rh)(x,z). \nonumber
\end{align}
So far we have proved that 
$R\colon L^1(G,\RUCb(G,\Dc_0);\alpha)\to\Kern( G,\Dc_0)$ is a contractive $*$-homomorphism. 
It is clear from the definition that $\Ker R=\{0\}$. 

It is easily checked that $(R^{-1}K)(x,y)=K(y,x^{-1}y)$ for any $K\in\Kern( G,\Dc_0)$. 
For any function $\beta\in L^1( G)$ with 
$\Vert K(v,w)\Vert\le\vert\beta(vw^{-1})\vert$ for $v,w\in G$, 
then $\Vert(R^{-1}K)(x,y)\Vert\le\vert\beta(x)\vert$ for $x\in G$, 
hence $\Vert R^{-1}K\Vert_{L^1( G,\RUCb( G,\Dc_0)}\le\Vert\beta\Vert_{L^1( G)}$. 
Therefore $\Vert R^{-1}K\Vert_{L^1( G,\RUCb( G,\Dc_0)}\le\Vert K\Vert_{\Kern( G,\Dc_0)}$, 
and in view of what we have already proved, it follows that $R$ is an isometry. 
This completes the proof of Assertions \eqref{R_item1}--\eqref{R_item2}. 

For Assertion~\eqref{R_item4} we first note that if the group $G$ is discrete, 
then $\RUCb(G,\Dc_0)=\ell^\infty(G,\Dc_0)$. 
In fact, since $G$ is discrete, for every topological space $Y$ 
and every function $\phi\colon G\to Y$ it follows that $\phi$ is continuous. 

Now, to prove that we have not only $\Ran R\subseteq\Kern(G,\Dc_0)$, 
but rather the equality there, 
let $K\in\Kern(G,\Dc_0)$ arbitrary. 
Then there exists $\beta\in\ell^1(G)$ with $\Vert K(x,y)\Vert\le\vert\beta(xy^{-1})\vert$ for all $x,y\in G$. 
Then the function $f\colon G\to\ell^\infty(G,\Dc_0)=\RUCb(G,\Dc_0)$, $(f(x))(y)=K(y,x^{-1}y)$, 
is well defined and has the property 
$$(\forall x\in G)\quad \Vert f(x)\Vert_{\ell^\infty(G,\Dc_0)}\le \vert\beta(x)\vert$$ 
hence $f\in\ell^1(G,\ell^\infty(G,\Dc_0))$. 
This implies $f\in L^1(G,\RUCb(G,\Dc_0);\alpha)$, and 
we have $Rf=K$ by the formulas that define $R$ and $f$.  
Hence $K\in\Ran R$, and this completes the proof of the equality $\Ran R\subseteq\Kern(G,\Dc_0)$. 

For Assertion~\eqref{R_item3}, it is well-known that if $G$ is either discrete or compact, 
then we have $\RUCb(G,\Dc_0)=\Cc(G,\Dc_0)\cap \ell^\infty(G,\Dc_0)$. 
(See the proof of Assertion~\eqref{R_item4} above for the discrete case.) 
If $G$ is compact, then we obtain a natural linear inclusion map 
$$\Cc(G\times G,\Dc_0)=\Cc(G\times G,\Dc_0)\cap\ell^\infty(G\times G,\Dc_0)\hookrightarrow L^1(G,\RUCb(G,\Dc_0);\alpha)$$ 
and it is easily seen that $R(\Cc(G\times G,\Dc_0))=\Cc(G\times G,\Dc_0)$ 
hence, by applying $R$ to the above inclusion, we obtain 
$\Cc(G\times G,\Dc_0)\subseteq\Ran R$. 
Since $\Ran R\subseteq\Kern(G,\Dc_0)$, it follows that $\Cc(G\times G,\Dc_0)\cap\Kern(G,\Dc_0)\subseteq\Ran R$. 
On the other hand, if $G$ is discrete, then the assertion follows by Assertion~\eqref{R_item4}

Conversely, if we have $\Ran R\supseteq \Kern(G,\Dc_0)\cap\Cc(G\times G,\Dc_0)$, 
then  
\begin{equation}\label{R_proof_eq1}
\RUCb(G,\Dc_0)=\Cc(G,\Dc_0)\cap \ell^\infty(G,\Dc_0).
\end{equation} 
In fact, the inclusion $R(L^1(G,\RUCb( G,\Dc_0);\alpha))\supseteq \Kern(G,\Dc_0)\cap\Cc(G\times G,\Dc_0)$ 
implies $L^1(G,\RUCb(G,\Dc_0);\alpha)\supseteq R^{-1}(\Kern(G,\Dc_0)\cap\Cc(G\times G,\Dc_0))$. 
For arbitrary $\phi\in\Cc(G,\CC)\cap L^1(G,\CC)$ and $\psi\in\Cc(G,\Dc_0)\cap\ell^\infty(G,\Dc_0)$  
let us define $K(x,y)=\phi(xy^{-1})\psi(x)$ for all $x,y\in G$. 
Then $K\in\Kern(G,\Dc_0)\cap\Cc(G\times G,\Dc_0)$, hence $R^{-1}K\in L^1(G,\RUCb(G,\Dc_0);\alpha)$, 
that is, $\phi\otimes\psi\in L^1(G,\RUCb(G,\Dc_0);\alpha)$. 
If we pick $\phi\in\Cc(G,\CC)\cap L^1(G,\CC)$ with $\int_G\phi\ne0$ 
and integrate the function $\phi\otimes\psi=\phi(\cdot)\psi\in L^1(G,\RUCb(G,\Dc_0);\alpha)$ 
then the integral of that function belongs to $\RUCb(G,\Dc_0)$, 
while on the other hand the integral is equal to 
$(\int_G\phi)\psi$, hence $\psi\in\RUCb(G)$. 
Since $\psi\in\Cc(G,\Dc_0)\cap\ell^\infty(G,\Dc_0)$ is arbitrary, we obtain \eqref{R_proof_eq1}. 

Since the $C^*$-algebra $\Dc_0$ is unital, 
it is straightforward to show that \eqref{R_proof_eq1} 
implies $\RUCb(G,\CC)=\Cc(G,\CC)\cap \ell^\infty(G,\CC)$, and then 
it follows by \cite[Th.~2.8]{CR66} along with \cite[Cor.~2]{Ki62} 
that the group $G$ is either discrete or compact. 
This completes the proof. 
\end{proof}

\subsection*{Some inverse-closed algebras of integral operators}
The isometric $*$-homo\-mor\-phi\-sm constructed in Proposition~\ref{R} can be used to establish a close relationship between the $\pi$-regular representation~$\Pi$ 
described in Remark~\ref{syst} and the representation that occurs in Proposition~\ref{mult}. 
In particular, by using the following result along with the faithful $*$-representation referred to in Proposition~\ref{mult}, 
one obtains an alternative proof for the fact that the mapping $R$ from Proposition~\ref{R}\eqref{R_item1} 
is a $*$-homomorphism, 
since the $\pi$-regular representation~\eqref{syst_eq0} is. 

\begin{proposition}\label{multiple}
If $G$ is a unimodular group, then there exists a unitary operator 
$$W\colon L^2( G\times G,\Hc_0)\simeq 
L^2( G,\Hc_0)\botimes L^2(G)\to L^2(G,L^2(G,\Hc_0))
\simeq L^2( G\times G,\Hc_0)$$ 
such that $(W\xi)(x,z)=\xi(xz,z)$ for $x,z\in G$ and 
$\xi\in L^2(G\times G,\Hc_0)$.  
Moreover, for every $f\in L^1(G,\RUCb(G,\Dc_0);\alpha)$ the diagram 
$$\xymatrix{
L^2( G,\Hc_0)\botimes L^2( G) \ar[r]^{W} \ar[d]_{T_{R(f)}\otimes\id_{L^2(G)}} 
& L^2( G,L^2( G,\Hc_0)) \ar[d]^{\Pi(f)} \\
L^2( G,\Hc_0)\botimes L^2( G) \ar[r]^{W} & L^2( G,L^2( G,\Hc_0))
}$$
is commutative.  
Here $\Pi$ is the $\pi$-regular representation of \eqref{syst_eq0}--\eqref{syst_eq1}. 
\end{proposition}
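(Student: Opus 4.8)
The plan is to handle the two assertions separately: first the unitarity of $W$, which is a consequence of unimodularity alone, and then the commutativity of the diagram, which reduces to a direct computation of both compositions applied to a generic vector.

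For the first assertion, consider the homeomorphism $\Psi_0\colon G\times G\to G\times G$, $\Psi_0(x,z)=(xz,z)$, with inverse $(x,z)\mapsto(xz^{-1},z)$. Since $G$ is unimodular, for every fixed $z\in G$ the right translation $x\mapsto xz$ preserves the Haar measure of $G$, hence $\Psi_0$ preserves the product Haar measure on $G\times G$. Therefore $(W\xi)(x,z)=\xi(xz,z)=(\xi\circ\Psi_0)(x,z)$ defines a linear isometry of $L^2(G\times G,\Hc_0)$ onto itself whose inverse is $(W^{-1}\eta)(x,z)=\eta(xz^{-1},z)$; that is, $W$ is unitary. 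Composing with the canonical unitaries $L^2(G\times G,\Hc_0)\simeq L^2(G,\Hc_0)\botimes L^2(G)$ and $L^2(G,L^2(G,\Hc_0))\simeq L^2(G\times G,\Hc_0)$ gives the unitary of the statement.

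For the commutativity, note that both $\Pi(f)\circ W$ and $W\circ(T_{R(f)}\otimes\id_{L^2(G)})$ are bounded operators on $L^2(G\times G,\Hc_0)$ depending linearly and continuously on $f\in L^1(G,\RUCb(G,\Dc_0);\alpha)$, using $\norm{\Pi(f)}\le\norm{f}_{L^1}$, the estimate $\norm{T_{R(f)}}\le\norm{R(f)}_{\Kern(G,\Dc_0)}\le\norm{f}_{L^1}$ from Propositions~\ref{mult} and~\ref{R}, and the unitarity of $W$. Hence it suffices to verify the identity $\Pi(f)W=W(T_{R(f)}\otimes\id_{L^2(G)})$ for $f$ in a dense subset (for instance $\Cc_c(G,\RUCb(G,\Dc_0))$) and against $\xi$ in a dense subspace where the integrals below converge absolutely and Fubini's theorem applies. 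On one side, the definition of $W$ together with \eqref{syst_eq1} gives
$$(\Pi(f)W\xi)(x,z)=\int_G f(y,xz)(W\xi)(y^{-1}x,z)\de y=\int_G f(y,xz)\xi(y^{-1}xz,z)\de y,$$
and the change of variable $y\mapsto xzy^{-1}$, which preserves the Haar measure because $G$ is unimodular (so inversion and left translations are measure-preserving), turns this into $\int_G f(xzy^{-1},xz)\xi(y,z)\de y$. On the other side, writing $R(f)(x,y)=f(xy^{-1},x)$ as in Proposition~\ref{R}\eqref{R_item1} and unwinding the formula for $T_K$ from Definition~\ref{k1}, we get $((T_{R(f)}\otimes\id_{L^2(G)})\xi)(x,z)=\int_G f(xy^{-1},x)\xi(y,z)\de y$, so that $(W(T_{R(f)}\otimes\id_{L^2(G)})\xi)(x,z)=\int_G f(xzy^{-1},xz)\xi(y,z)\de y$. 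The two expressions coincide, which yields the commutativity.

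The only slightly delicate point, which I expect to be the main (purely technical) obstacle, is the measure-theoretic bookkeeping: justifying the substitution $y\mapsto xzy^{-1}$ inside the integral—this is exactly where unimodularity enters—and the applications of Fubini's theorem when one unwinds the identifications between $L^2(G\times G,\Hc_0)$, $L^2(G,\Hc_0)\botimes L^2(G)$ and $L^2(G,L^2(G,\Hc_0))$. These are dealt with in the standard way by first restricting to $f$ and $\xi$ that are compactly supported (or of tensor-product form) and then passing to the general case via the density-and-continuity argument indicated above.
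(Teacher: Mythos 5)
Your proof is correct and follows essentially the same route as the paper's: the paper also verifies the intertwining relation by a direct computation, performing the change of variables $w\mapsto y^{-1}v$ (equivalently your $y\mapsto xzy^{-1}$), which is licensed by unimodularity, and it simply asserts the unitarity of $W$ that you spell out via the measure-preserving homeomorphism $(x,z)\mapsto(xz,z)$. The extra density/continuity scaffolding you add is harmless but not needed, since the pointwise identity holds almost everywhere for general $f$ and $\xi$ by the standard Bochner--Fubini arguments.
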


\begin{proof}
It is easily seen that the operator $W$ in the statement is unitary. 
Moreover, for $f\in L^1(G,\RUCb(G,\Dc_0);\alpha)$ and $\xi\in L^2(G\times G,\Hc_0)$ we have 
$$((T_{R(f)}\otimes\id_{L^2(G)})\xi)(v,u)=\int\limits_ G f(vw^{-1},v)\xi(w,u)\de w
=\int\limits_ G f(y,v)\xi(y^{-1}v,u)\de y,$$
hence 
$$\begin{aligned}
(W(T_{R(f)}\otimes\id_{L^2(G)})\xi)(x,z)
&=((T_{R(f)}\otimes\id)\xi)(xz,z) 
=\int\limits_ G f(y,xz)\xi(y^{-1}xz,z) \de y \\
&=\int\limits_ G f(y,xz)(W\xi)(y^{-1}x,z) \de y 
=(\Pi(f)W\xi)(x,z),
\end{aligned}$$
where the latter equality follows from~\eqref{syst_eq1}. 
\end{proof}

\begin{remark}\label{history}
\normalfont
The proofs of our Propositions \ref{R}\eqref{R_item1} and \ref{multiple} 
are partially based on some ideas related to the Takai duality theorem on crossed products of $C^*$-algebras; 
compare for instance the proof of \cite[Th. 7.7.12]{Pe79} or \cite{Ta75}. 
The special case of the discrete groups was also independently treated along the same lines in 
\cite[Prop. 1 and 3]{FGL08}. 
We also mention that crossed products involving the $C^*$-algebras of uniformly continuous bounded functions 
on a locally compact group, along with the related algebras of integral operators, 
were studied in \cite{Ge11} in connection with some problems in the spectral theory. 
See also Section~\ref{Sect5} below. 
\end{remark}

\begin{lemma}\label{biller}
Let $\Ac$ be an involutive symmetric Banach algebra 
and denote by $\rho_0\colon\Ac\to C^*(\Ac)$ the canonical homomorphism into the universal $C^*$-algebra of $\Ac$. 
Assume we have fixed a faithful $*$-representation $C^*(\Ac)\hookrightarrow\Bc(\Hc_1)$ 
satisfying the condition that if $\Ac$ has a unit element, then $\id_{\Hc_1}\in C^*(\Ac)$, 
and if $\Ac$ has no unit element, then $\id_{\Hc_1}\not\in C^*(\Ac)$. 
Also assume that $\rho\colon\Ac\to\Bc(\Hc)$ is a continuous injective $*$-representation satisfying the following condition: 
\begin{itemize}
\item There exist 
a complex Hilbert space $\Hc_2$ and a unitary operator 
$S\colon \Hc_1\to\Hc\botimes\Hc_2$
such that for every $a\in\Ac$ the diagram 
$$\xymatrix{
\Hc_1 \ar[r]^{S} \ar[d]_{\rho_0(a)} 
& \Hc\botimes\Hc_2 \ar[d]^{\rho(a)\otimes\id_{\Hc_2}} \\
\Hc_1 \ar[r]^{S} & \Hc\botimes\Hc_2
}$$
is commutative. 
\end{itemize} 
Then $\CC\id_{\Hc}+\rho(\Ac)$ is an inverse-closed subalgebra of $\Bc(\Hc)$. 
\end{lemma}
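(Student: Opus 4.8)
\textbf{Proof strategy for Lemma~\ref{biller}.}

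The plan is to exploit Proposition~\ref{univ}, which already tells us that $\rho_0(\Ac)$ is inverse-closed in $C^*(\Ac)$; the task is then to transport this inverse-closedness across the dilation given by $S$ and to handle the unitization carefully. I would first reduce to the case where $\Ac$ has a unit. If $\Ac$ is unital, then by hypothesis $\id_{\Hc_1}\in C^*(\Ac)$, and $\rho_0(\1_\Ac)$ need not be $\id_{\Hc_1}$; but the intertwining property forces $S(\rho_0(\1_\Ac))S^{-1}=\rho(\1_\Ac)\otimes\id_{\Hc_2}$, and since $\rho$ is an injective $*$-representation of a Banach algebra with bounded inverse on $\rho(\Ac)$... actually the cleaner route is: replace $\Ac$ by its unitization $\Ac_1$ throughout (extending $\rho_0$, $\rho$, and $S$ in the obvious way, sending the adjoined unit to $\id$ on each space), so that without loss of generality we may assume $\Ac$ is unital and $\rho_0(\1)=\id_{\Hc_1}$, $\rho(\1)=\id_{\Hc}$, and $\CC\id_{\Hc}+\rho(\Ac)=\rho(\Ac_1)$. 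The symmetry of $\Ac_1$ follows from symmetry of $\Ac$ by a standard fact (e.g.~\cite[Prop.~7.10]{Bi10} applied appropriately, or directly).

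With that reduction, suppose $a\in\Ac_1$ is such that $\rho(a)\in\Bc(\Hc)^\times$, say $\rho(a)^{-1}=b\in\Bc(\Hc)$; we must show $a\in\Ac_1^\times$. Because the diagram commutes for every element of $\Ac_1$, we have $S\rho_0(a)S^{-1}=\rho(a)\otimes\id_{\Hc_2}$, which is invertible in $\Bc(\Hc\botimes\Hc_2)$ with inverse $b\otimes\id_{\Hc_2}$. Hence $\rho_0(a)$ is invertible in $\Bc(\Hc_1)$, so $\rho_0(a)$ is invertible in any $C^*$-algebra between $C^*(\Ac_1)$ and $\Bc(\Hc_1)$ that contains it; in particular, since $C^*(\Ac_1)$ is a unital $C^*$-subalgebra of $\Bc(\Hc_1)$ and unital $C^*$-subalgebras are inverse-closed in any larger $C^*$-algebra, $\rho_0(a)$ is invertible in $C^*(\Ac_1)$. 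Now Proposition~\ref{univ} (which applies since $\Ac_1$ is involutive symmetric unital, and $\rho_0$ is its canonical homomorphism into $C^*(\Ac_1)$) gives that $\rho_0(\Ac_1)$ is inverse-closed in $C^*(\Ac_1)$, so $\rho_0(a)\in\rho_0(\Ac_1)^\times$, i.e.~there is $a'\in\Ac_1$ with $\rho_0(a)\rho_0(a')=\rho_0(a')\rho_0(a)=\rho_0(\1)$. Since $\rho_0$ is injective on $\Ac_1$ (it is a faithful representation, being a homomorphism into the enveloping $C^*$-algebra composed with a faithful representation of the latter — one must check injectivity here, which holds because the enveloping norm dominates... in the symmetric case the canonical homomorphism is injective), we conclude $aa'=a'a=\1$, so $a\in\Ac_1^\times$. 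Unwinding the reduction, $\CC\id_{\Hc}+\rho(\Ac)$ is inverse-closed in $\Bc(\Hc)$.

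The main obstacle I anticipate is the bookkeeping around the unit, specifically the two clauses in the hypothesis distinguishing whether $\id_{\Hc_1}\in C^*(\Ac)$. These are there precisely to make the unitization step legitimate: in the non-unital case one needs $C^*(\Ac_1)=\CC\id_{\Hc_1}+C^*(\Ac)$ to genuinely be the enveloping $C^*$-algebra of $\Ac_1$, and one needs $\CC\id_{\Hc}+\rho(\Ac)$ to be a genuine unitization of $\rho(\Ac)$ (i.e.~$\id_{\Hc}\notin\rho(\Ac)$ — if $\Ac$ has no unit and $\rho$ is injective, then $\id_{\Hc}\notin\rho(\Ac)$, but this uses injectivity of $\rho$ together with the absence of a unit). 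A secondary subtlety is verifying that $\rho_0$ is injective on $\Ac_1$: for a symmetric involutive Banach algebra the canonical map into the enveloping $C^*$-algebra is injective (no nonzero element is annihilated by all $*$-representations), but I would state this explicitly or cite it rather than leave it implicit. Once these points are pinned down, the rest is the routine diagram-chase above.
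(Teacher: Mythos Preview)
Your approach is correct and essentially identical to the paper's: both arguments push invertibility across the intertwining $S$, invoke inverse-closedness of unital $C^*$-subalgebras in $\Bc(\Hc_1)$, and finish with Proposition~\ref{univ}; the paper simply handles the unital and non-unital cases separately (using functoriality of $C^*(\cdot)$ in the latter) rather than unitizing at the outset. One correction to your bookkeeping: symmetry of $\Ac$ alone does \emph{not} force $\rho_0$ to be injective, but the hypothesis does, since $S\rho_0(a)S^{-1}=\rho(a)\otimes\id_{\Hc_2}$ and $\rho$ is assumed injective---and in any case the paper sidesteps the issue because Proposition~\ref{univ} is used in its spectral-invariance form $a\in\Ac_1^\times\Longleftrightarrow\rho_0(a)\in C^*(\Ac_1)^\times$, which needs no injectivity.
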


\begin{proof} 
We must prove that $(\CC\id_{\Hc}+\rho(\Ac))^\times=(\CC\id_{\Hc}+\rho(\Ac))\cap\Bc(\Hc)^\times$.  
We will consider separately the two cases that can occur. 

Case $1^\circ$ Assume that there exists a unit element $\1\in\Ac$, 
hence $\id_{\Hc_1}\in C^*(\Ac)$ by the hypothesis. 
Since $\rho_0(\1)$ is the unit element of $C^*(\Ac)$, it follows by the uniqueness of the unit element 
that $\rho_0(\1)=\id_{\Hc_1}$, and then the commutative diagram from the statement shows that also $\rho(\1)=\id_{\Hc}$. 

The conclusion reduces to 
$\rho(\Ac)^\times=\rho(\Ac)\cap\Bc(\Hc)^\times$. 
Since $\rho$ is a faithful representation, 
the latter equality is equivalent to the fact that for 
$a\in\Ac$ we have 
$a\in\Ac^\times$ if and only if $\rho(a)\in \Bc(\Hc)^\times$.
In fact, we have 
$$a\in\Ac^\times\iff \rho_0(a)\in C^*(\Ac)^\times 
\iff \rho_0(a)\in \Bc(\Hc_1)^\times
\iff \rho(a)\in \Bc(\Hc)^\times,$$
where the first equivalence follows by Proposition~\ref{univ}, 
the second equivalence relies on the fact that every $C^*$-algebra of operators is inverse closed, 
and the third equivalence is a consequence of the commutative diagram in the statement. 

Case $2^\circ$ 
Now assume that $\Ac$ has no unit element, hence $\id_{\Hc_1}\not\in C^*(\Ac)$, again by the hypothesis. 
It suffices to prove that $(\CC\id_{\Hc}+\rho(\Ac))^\times\supseteq(\CC\id_{\Hc}+\rho(\Ac))\cap\Bc(\Hc)^\times$, 
since the converse inclusion always holds. 
So let $  z\in\CC$ and $a\in\Ac$ arbitrary with $  z\id_{\Hc}+\rho(a)\in\Bc(\Hc)^\times$. 
We will show that $(  z\id_{\Hc}+\rho(a))^{-1}\in\CC\id_{\Hc}+\rho(\Ac)$. 

To this end let $\Ac_1:=\CC\1\dotplus\Ac$ be Banach $*$-algebra obtained as the unitization of $\Ac$, 
with the canonical unital $*$-homomorphism into its enveloping $C^*$-algebra $\rho_1\colon\Ac_1\to C^*(\Ac_1)$. 
Also let $\iota\colon\Ac\hookrightarrow\Ac_1$ be the canonical inclusion map. 
It follows by the functorial property of enveloping $C^*$-algebras (see for instance \cite[Th. 10.1.11(c)]{Pl01}) 
that there exists a unique $*$-homomorphism $\phi$ for which the diagram 
$$\xymatrix{
\Ac \ar[r]^{\iota} \ar[d]_{\rho_0} & \Ac_1 \ar[d]^{\rho_1} \\
C^*(\Ac) \ar[r]^{\phi} & C^*(\Ac_1)
}$$
is commutative. 
By using the commutative diagram from the statement and then the fact that 
unital $C^*$-algebras of operators are inverse closed, we then obtain  
$$\begin{aligned}
  z\id_{\Hc}+\rho(a)\in\Bc(\Hc)^\times
& \Rightarrow    z\id_{\Hc_1}+\rho_0(a)\in\Bc(\Hc_1)^\times \\
& \Rightarrow   z\id_{\Hc_1}+\rho_0(a)\in (\CC\id_{\Hc_1}+C^*(\Ac))^\times.
\end{aligned}$$
Now, by using the hypothesis $\id_{\Hc_1}\not\in C^*(\Ac)^\times$, 
it follows that the $C^*$-algebra $C^*(\Ac)_1:=\CC\id_{\Hc_1}\dotplus C^*(\Ac)$ is the unitization of $C^*(\Ac)$. 
Therefore, if we 
extend $\phi$  to  $C^*(\Ac)_1$ by $\phi(\id_{\Hc_1})=\1\in C^*(\Ac_1)$, 
then we further obtain 
$$\begin{aligned} 
  z\id_{\Hc_1}+\rho_0(a)\in (C^*(\Ac)_1)^\times
& \Rightarrow
\phi(  z\id_{\Hc_1}+\rho_0(a))\in C^*(\Ac_1)^\times \\
& \Rightarrow \rho_1(  z\1+a) \in C^*(\Ac_1)^\times \\
& \Rightarrow   z\1+a \in \Ac_1^\times
\end{aligned}$$
where the first implication holds since unital $*$-homomorphisms map invertible elements into invertible elements, 
the second implication relies on the above commutative diagram, 
and the third implication follows by using Proposition~\ref{univ} 
for the symmetric unital Banach $*$-algebra $\Ac_1$ with its canonical $*$-homomorphism 
$\rho_1\colon\Ac_1\to C^*(\Ac_1)$. 

Finally, by $z\1+a \in \Ac_1^\times$ we obtain 
$(  z\id_{\Hc}+\rho(a))^{-1}\in\CC\id_{\Hc}+\rho(\Ac)$, 
and this completes the proof. 
\end{proof}

Unlike other approaches to constructing inverse-closed algebras, 
Lemma~\ref{biller} above relies neither on Hulanicki's lemma \cite[Prop. 2.5]{Hu72} 
which requires computations of spectral radii, 
nor on its later improvements along the same lines.

\begin{theorem}\label{main}
Let the group $ G$ be unimodular, amenable, such that its discrete undelying group $G_d$ is rigidly symmetric, and 
denote  
$\Ac_{ G,\Dc_0}:=\{T_K\mid K\in\Ran R\}$, 
using the notation of Proposition~\ref{R}. 
Then $\CC\1+\Ac_{G,\Dc_0}$ is an inverse-closed involutive subalgebra of $\Bc(L^2(G,\Hc_0))$. 
If moreover $ G$ is a discrete group, then  
$\1\in\Ac_{G,\Dc_0}=\{T_K\mid K\in\Kern(G,\Dc_0)\}$. 
\end{theorem}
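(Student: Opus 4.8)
The plan is to apply Lemma~\ref{biller} with $\Ac:=L^1(G,\RUCb(G,\Dc_0);\alpha)$ and $\rho:=T_{R(\cdot)}$, i.e. the composition of the isometric $*$-homomorphism $R$ of Proposition~\ref{R}\eqref{R_item1} with the canonical representation $K\mapsto T_K$ of $\Kern(G,\Dc_0)$ from Proposition~\ref{mult}. First I would check that $\Ac$ is an involutive symmetric Banach algebra: this is exactly Corollary~\ref{leptin2}, since $G$ is rigidly symmetric. Next I would verify the injectivity of $\rho$ on $\Ac$: $R$ is an isometry (hence injective) by Proposition~\ref{R}\eqref{R_item1}, and $K\mapsto T_K$ is faithful by Proposition~\ref{mult}, so $\rho$ is an injective continuous $*$-representation on $\Hc:=L^2(G,\Hc_0)$. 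It then remains to produce the Hilbert space $\Hc_2$, the faithful representation $C^*(\Ac)\hookrightarrow\Bc(\Hc_1)$ with the correct unit/non-unit dichotomy, and the unitary $S\colon\Hc_1\to\Hc\botimes\Hc_2$ intertwining $\rho_0(a)$ with $\rho(a)\otimes\id_{\Hc_2}$.

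For the intertwining datum I would exploit Proposition~\ref{multiple}: taking $\Hc_2:=L^2(G)$, the unitary $W\colon\Hc\botimes\Hc_2\to L^2(G,L^2(G,\Hc_0))$ conjugates $T_{R(f)}\otimes\id_{L^2(G)}$ into the $\pi$-regular representation $\Pi(f)$ of \eqref{syst_eq0}. Since $\pi$ is a faithful $*$-representation of $\RUCb(G,\Dc_0)$ and $G$ is amenable, the $\pi$-regular representation $\Pi$ of the covariance algebra is (weakly contained in, hence equal on the $C^*$-level to) the universal representation; more precisely, amenability of $G$ forces the regular representation of the crossed product to be faithful on $C^*(\Ac)$, so $\Pi$ extends to a faithful $*$-representation of $C^*(\Ac)$ on $\Hc_1:=L^2(G,L^2(G,\Hc_0))$. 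Composing with $W$, we obtain the required unitary $S:=W$ (up to the identification of $\Hc_1$) making the diagram of Lemma~\ref{biller} commute, with $\rho_0$ realized concretely as $\Pi$. The unit/non-unit condition is automatic: $\Ac$ has no unit (as $G$ is non-discrete, $L^1(G,\RUCb(G,\Dc_0);\alpha)$ is non-unital), and correspondingly $\id_{\Hc_1}\notin C^*(\Ac)$ because the regular representation of a crossed product by a non-discrete group contains no nonzero compact-support approximate identity converging to $\id$; more directly, $C^*(\Ac)\subseteq\overline{\Pi(\Ac)}$ and $\Pi(\Ac)$ consists of genuine integral operators, none of which is $\id_{\Hc_1}$ nor has $\id_{\Hc_1}$ in its $C^*$-closure by a support argument on the kernel diagonal. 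Lemma~\ref{biller} then yields that $\CC\id_{\Hc}+\rho(\Ac)=\CC\1+\Ac_{G,\Dc_0}$ is inverse-closed in $\Bc(L^2(G,\Hc_0))$.

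For the discrete case, Proposition~\ref{R}\eqref{R_item4} gives $\Ran R=\Kern(G,\Dc_0)$, so $\Ac_{G,\Dc_0}=\{T_K\mid K\in\Kern(G,\Dc_0)\}$ immediately. For $\1\in\Ac_{G,\Dc_0}$: when $G$ is discrete the identity operator on $L^2(G,\Hc_0)$ is $T_K$ for the diagonal kernel $K(x,y)=\delta_{x,y}\id_{\Hc_0}$, and this $K$ lies in $\Kern(G,\Dc_0)$ since $\Vert K(x,y)\Vert\le\vert\beta(xy^{-1})\vert$ with $\beta=\mathbf{1}_{\{e\}}\in\ell^1(G)$; hence $\id_{L^2(G,\Hc_0)}\in\Ac_{G,\Dc_0}$. (One should observe $\Ac_{G,\Dc_0}$ is already a unital algebra in this case, so the unitization in the first assertion is harmless.)

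\textbf{Main obstacle.} The crux is the amenability step: showing that the $\pi$-regular representation $\Pi$ of the covariance algebra is faithful on $C^*(\Ac)$, so that $\rho_0$ may be identified (via $W$) with the restriction of $\Pi$, and simultaneously checking the non-unital dichotomy $\id_{\Hc_1}\notin C^*(\Ac)$. This parallels Corollary~\ref{reg}, where amenability collapsed $C^*(L^1(G))$ onto $C^*_r(G)$ inside $\Bc(L^2(G))$; here one needs the analogous statement that for an amenable $G$ the full and reduced crossed products $\RUCb(G,\Dc_0)\rtimes_\alpha G$ and $\RUCb(G,\Dc_0)\rtimes_{\alpha,r}G$ coincide, which is standard (e.g. \cite[Th. 4.21]{Pa88} or \cite[Th. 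A.18]{Wi07} in the crossed-product form), combined with the fact that the chosen faithful $\pi$ induces a faithful regular representation of the reduced crossed product. Verifying that these abstract identifications are compatible with the concrete unitary $W$ of Proposition~\ref{multiple} — i.e. that $W$ really does implement the passage from $C^*(\Ac)$ acting on $\Hc_1$ to $\rho(\Ac)\otimes\id$ acting on $\Hc\botimes\Hc_2$ — is the technical heart of the argument; everything else is bookkeeping already supplied by Propositions~\ref{leptin1}, \ref{leptin2}, \ref{R}, \ref{mult}, \ref{multiple} and Lemma~\ref{biller}.
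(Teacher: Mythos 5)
Your proposal is correct and follows essentially the same route as the paper's proof: Lemma~\ref{biller} applied to the covariance algebra $L^1(G,\RUCb(G,\Dc_0);\alpha)$, with symmetry supplied by Corollary~\ref{leptin2}, the intertwining unitary by Proposition~\ref{multiple}, amenability identifying $C^*(\Ac)$ with the norm-closure of the range of the regular representation $\Pi$, and Proposition~\ref{R}\eqref{R_item4} for the discrete case. The only small omission is that your verification of the unit/non-unit dichotomy in Lemma~\ref{biller} is phrased only for non-discrete $G$, whereas the first assertion also covers discrete $G$, where one instead uses the other branch (the covariance algebra is then unital and $\Pi$ carries its unit to $\id_{\Hc_1}$); the paper treats both cases explicitly.
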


\begin{proof}
Let us define 
$$\rho\colon L^1(G,\RUCb(G,\Dc_0);\alpha)\to\Bc(L^2( G,\Hc_0)),\quad 
f\mapsto T_{R(f)}. $$
Since the group $ G$ is unimodular, 
it follows by Propositions \ref{R}~and~\ref{mult} 
that the mapping $\rho$ is a continuous injective $*$-homomorphism. 

Moreover, since $ G$ is a rigidly symmetric group, Corollary~\ref{leptin2} 
shows that the covariance algebra $L^1(G,\RUCb(G,\Dc_0);\alpha)$ is 
a symmetric involutive Banach algebra. 
On the other hand, the universal enveloping $C^*$-algebra 
$C^*(L^1(G,\RUCb(G,\Dc_0);\alpha))$ is isomorphic to the norm-closure of 
the range of the regular representation 
$$\Pi\colon L^1(G,\RUCb(G,\Dc_0);\alpha)\to\Bc(L^2(G\times G,\Hc_0)),$$  
since $ G$ is an amenable group; see for instance~\cite[Th. 7.13]{Wi07}. 
The corresponding canonical homomorphism 
$$\rho_0\colon L^1(G,\RUCb(G,\Dc_0);\alpha)\to C^*(L^1(G,\RUCb(G,\Dc_0);\alpha)),\quad f\mapsto\Pi(f)$$
is the one induced by $\Pi$. 

We have thus checked that the hypothesis of Lemma~\ref{biller} is satisfied,   
by using Proposition~\ref{multiple} 
and the following facts: 
If the group $G$ is discrete, then the Banach $*$-algebra $L^1(G,\RUCb(G,\Dc_0);\alpha)$ is unital 
and $\rho_0$ is a unital $*$-homomorphism, 
while if $G$ fails to be discrete, then $L^1(G,\RUCb(G,\Dc_0);\alpha)$ has no unit element 
and also the crossed product $C^*(L^1(G,\RUCb(G,\Dc_0);\alpha))$ does not contain the identity operator 
on $L^2(G\times G,\Hc_0)$. 
Then the first part of the conclusion follows since we have 
$\Ac_{G,\Dc_0}=\rho(L^1(G,\RUCb(G,\Dc_0);\alpha))$. 
For the second assertion we use Proposition~\ref{R}\eqref{R_item4}. 
\end{proof}

\begin{example}\label{pol}
\normalfont
The hypothesis of Theorem~\ref{main} is satisfied if $G$ is a locally compact nilpotent group. 
Indeed, as already noted after Definition~\ref{sym},  
the underlying discrete  group  of such a group is rigidly symmetric by~\cite[Cor.~6]{Po92}. 
Moreover, every nilpotent group is amenable; see for instance \cite[Prop. (0.15)--(0.16)]{Pa88}. 

Recall also that locally compact nilpotent group have polynomial growth by \cite[Cor. (6.18)]{Pa88}. 
Theorem~\ref{main} applies for any discrete finitely generated group $G$ with  polynomial growth. 
Indeed, every group of that type has a nilpotent subgroup of finite index by \cite{Gr81}, 
and therefore the required rigid symmetry follows by \cite[Cor. 3]{LP79} (see also \cite[Th. 3]{Le74}). 
Moreover, the groups with polynomial growth (the locally compact ones, not necessarily discrete) 
are unimodular by \cite[Prop. (6.6), (6.9)]{Pa88} 
and amenable by \cite[Prop. (0.13)]{Pa88}.  
\end{example}

\begin{corollary}\label{baskakov}
Let $\Hc$ be a complex Hilbert space and assume that we have an orthogonal direct sum decomposition 
$\Hc=\bigoplus\limits_{\gamma\in\Gamma}\Hc_\gamma$ 
whose summands are isomorphic to each other, 
where $\Gamma$ is a 
finitely generated group with polynomial growth. 
Denote by $P_\gamma\in\Bc(\Hc)$ the orthogonal projection on $\Hc_\gamma$ for $\gamma\in\Gamma$, 
and define 
$$\Bc:=\{S\in\Bc(\Hc)\mid(\exists\beta\in\ell^1(\Gamma))\quad 
\Vert P_{\gamma_1} S P_{\gamma_2}\Vert\le\beta_{\gamma_1\gamma_2^{-1}} 
\text{ for }\gamma_1,\gamma_2\in\Gamma\}. $$
Then $\Bc$ is a unital subalgebra of $\Bc(\Hc)$ and we have 
$\Bc^\times=\Bc\cap\Bc(\Hc)^\times$. 
\end{corollary}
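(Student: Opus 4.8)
The plan is to deduce Corollary~\ref{baskakov} from Theorem~\ref{main} by identifying $\Bc$ with the algebra $\CC\id+\Ac_{\Gamma,\Dc_0}$ for a suitable choice of the Hilbert space $\Hc_0$ and the unital $C^*$-algebra $\Dc_0\subseteq\Bc(\Hc_0)$, with $G:=\Gamma$ a discrete group. First I would observe that $\Gamma$, being finitely generated with polynomial growth, has a nilpotent subgroup of finite index by \cite{Gr81}, hence is rigidly symmetric by \cite[Cor.~3]{LP79}; it is amenable and (being discrete) unimodular; so the hypothesis of Theorem~\ref{main} is met. Next, fixing an isomorphism $\Hc_\gamma\simeq\Hc_0$ for each $\gamma\in\Gamma$ (where $\Hc_0$ is any fixed model for the common summand), I would produce a unitary $U\colon\Hc\to\ell^2(\Gamma,\Hc_0)=L^2(\Gamma,\Hc_0)$ carrying the decomposition $\Hc=\bigoplus_\gamma\Hc_\gamma$ onto the standard one, so that $UP_\gamma U^{-1}$ becomes the projection onto the $\gamma$-th coordinate. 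Then I would take $\Dc_0:=\Bc(\Hc_0)$ if $\Hc_0$ is finite-dimensional, and more generally any unital $C^*$-subalgebra of $\Bc(\Hc_0)$ large enough to contain all the ``matrix entries'' $P_{\gamma_1}SP_{\gamma_2}$ of the operators $S\in\Bc$ — in the cleanest formulation one simply takes $\Dc_0=\Bc(\Hc_0)$, which is admissible in Setting~\ref{k0} since no separability of $\Hc_0$ is required there.

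The heart of the argument is the identification of $U\Bc U^{-1}$ with $\{T_K\mid K\in\Kern(\Gamma,\Dc_0)\}$. Given $S\in\Bc(\Hc)$, write $K(\gamma_1,\gamma_2)\in\Dc_0=\Bc(\Hc_0)$ for the operator obtained from $P_{\gamma_1}SP_{\gamma_2}$ after the identifications $\Hc_{\gamma_i}\simeq\Hc_0$. Since $\Gamma$ is discrete with counting Haar measure, the operator $T_K$ on $L^2(\Gamma,\Hc_0)$ acts by $(T_Kf)(\gamma_1)=\sum_{\gamma_2}K(\gamma_1,\gamma_2)f(\gamma_2)$, which is exactly the block-matrix description of $S$ transported by $U$. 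The condition defining membership in $\Bc$, namely the existence of $\beta\in\ell^1(\Gamma)$ with $\Vert P_{\gamma_1}SP_{\gamma_2}\Vert\le\beta_{\gamma_1\gamma_2^{-1}}$, is literally the condition $\Vert K(\gamma_1,\gamma_2)\Vert\le|\beta(\gamma_1\gamma_2^{-1})|$ that defines $\Kern_{\ell^1(\Gamma)}(\Gamma,\Dc_0)=\Kern(\Gamma,\Dc_0)$. Conversely any $K\in\Kern(\Gamma,\Dc_0)$ gives, via $U^{-1}T_KU$, an element of $\Bc$. Thus $U\Bc U^{-1}=\{T_K\mid K\in\Kern(\Gamma,\Dc_0)\}$ as subsets of $\Bc(L^2(\Gamma,\Hc_0))$, and one should also check that $\Bc$ really is an algebra: this follows because $K\mapsto T_K$ is multiplicative (Proposition~\ref{mult}) and $\Kern(\Gamma,\Dc_0)$ is closed under the composition $\star$, or equivalently it follows directly from $\Ac_{\Gamma,\Dc_0}$ being an algebra by Theorem~\ref{main}.

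Finally I would invoke the second assertion of Theorem~\ref{main}: since $\Gamma$ is discrete, $\1\in\Ac_{\Gamma,\Dc_0}=\{T_K\mid K\in\Kern(\Gamma,\Dc_0)\}$ and $\CC\1+\Ac_{\Gamma,\Dc_0}=\Ac_{\Gamma,\Dc_0}$ is an inverse-closed involutive subalgebra of $\Bc(L^2(\Gamma,\Hc_0))$. Transporting back by the unitary $U$, which preserves invertibility and the operator-norm structure, yields that $\Bc$ is a unital subalgebra of $\Bc(\Hc)$ with $\Bc^\times=\Bc\cap\Bc(\Hc)^\times$, as claimed. The only mildly delicate point — the step I would watch most carefully — is the bookkeeping of the identifications $\Hc_\gamma\simeq\Hc_0$: one must fix these isomorphisms once and for all and verify that the block-norm bounds $\Vert P_{\gamma_1}SP_{\gamma_2}\Vert$ are unchanged under them (they are, since the identifications are unitary), so that the matching with the norm on $\Kern(\Gamma,\Dc_0)$ is exact and no constants intervene. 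Everything else is a direct translation through Theorem~\ref{main} and Proposition~\ref{R}\eqref{R_item4}.
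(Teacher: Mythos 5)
Your proof is correct and follows essentially the same route as the paper's: identify $\Hc$ with $\ell^2(\Gamma,\Hc_0)$, take $\Dc_0=\Bc(\Hc_0)$, match the defining condition of $\Bc$ with that of $\Kern(\Gamma,\Dc_0)$ via the block-matrix correspondence $S\mapsto K_S$, and invoke the discrete-group assertion of Theorem~\ref{main} (whose hypotheses hold by Gromov's theorem and the rigid symmetry of groups with a nilpotent subgroup of finite index, exactly as in Example~\ref{pol}). The only cosmetic difference is that you carry along an explicit unitary $U$ implementing the identifications $\Hc_\gamma\simeq\Hc_0$, where the paper simply assumes $\Hc_\gamma=\Hc_0$ from the outset.
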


\begin{proof}
Since the Hilbert spaces in the family $\{\Hc_\gamma\}_{\gamma\in\Gamma}$ are isomorphic to each other, 
we may actually assume that there exists some complex Hilbert space $\Hc_0$ such that $\Hc_\gamma=\Hc_0$ for every $\gamma\in\Gamma$. 
Then $\Hc=\ell^2(\Gamma)\botimes\Hc_0=\ell^2(\Gamma,\Hc_0)$. 
It follows by Example~\ref{pol} that the group $\Gamma$ is amenable and rigidly symmetric, 
hence Theorem~\ref{main} applies with $\Dc_0=\Bc(\Hc_0)$.  
Specifically, the formula 
$$(\forall S\in\Bc(\Hc))(\forall \gamma_1,\gamma_2\in\Gamma)\quad 
K_S(\gamma_1,\gamma_2)=P_{\gamma_1}S\vert_{\Ran P_{\gamma_2}}$$  
defines an algebra isomorphism $\Bc\mathop{\to}\limits^\sim\Kern(G,\Dc_0)$, $S\mapsto K_S$,  
which is actually the inverse of the canonical representation
$\Kern(G,\Dc_0)\mathop{\to}\limits^\sim\Ac_{G,\Dc_0}$, $K\mapsto T_K$ 
(see~\eqref{intop} and Proposition~\ref{mult}). 
We thus obtain $\Bc=\Ac_{G,\Dc_0}$,  and the conclusion follows by Theorem~\ref{main}. 
\end{proof}

\begin{remark}
\normalfont
The special case Corollary~\ref{baskakov} when $\Gamma$ is an abelian group 
is also a special case of \cite[Cor.~1 to Th.~2]{Ba97}. 
On the other hand, the special case of Corollary~\ref{baskakov} when $\dim\Hc_\gamma=1$ 
for every $\gamma\in\Gamma$ was obtained in~\cite{FGL08}. 
\end{remark}

\section{Larger inverse-closed algebras of integral operators}\label{Sect3}

For the sake of completeness, we will sketch here a procedure that allows one to construct, 
on some special topological groups, 
inverse-closed algebras of integral operators 
that are larger than the algebra from Theorem~\ref{main}. 


\begin{lemma}\label{L1}
If $\Ac$ is a unital associative algebra and $\Jc$ is a left ideal in $\Ac$ 
(that is, $\Ac\Jc\subseteq\Jc$), then 
$\CC\1+\Jc$ is an inverse-closed subalgebra of $\Ac$. 
\end{lemma}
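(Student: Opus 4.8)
The plan is to show directly that every element of $\CC\1+\Jc$ that is invertible in $\Ac$ already has its inverse in $\CC\1+\Jc$. Fix $a=z\1+j$ with $z\in\CC$ and $j\in\Jc$, and suppose $a\in\Ac^\times$, with inverse $b\in\Ac$. The first observation is that $z\neq0$: if $z=0$, then $a=j\in\Jc$, so $\1=ba=bj\in\Jc$ since $\Jc$ is a left ideal, which forces $\Jc=\Ac$ and hence $\CC\1+\Jc=\Ac$, in which case the assertion is trivial. So we may assume $z\neq 0$.

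Next I would compute $b$ explicitly. From $ab=\1$ we get $(z\1+j)b=\1$, that is $zb=\1-jb$, so
$$b=\frac1z\1-\frac1z jb.$$
Here $jb\in\Jc$ because $\Jc$ is a left ideal and $b\in\Ac$, so $jb\in\Ac\Jc\subseteq\Jc$. Therefore $b\in\frac1z\1+\Jc\subseteq\CC\1+\Jc$, which is exactly what we wanted: $b\in(\CC\1+\Jc)\cap\Ac^\times$, and since $b$ is a two-sided inverse of $a$ in $\Ac$ it is a fortiori an inverse of $a$ inside the subalgebra $\CC\1+\Jc$ once we know it lies there. Hence $a\in(\CC\1+\Jc)^\times$.

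To finish, I would also note the easy facts that $\CC\1+\Jc$ is indeed a subalgebra of $\Ac$ containing $\1$: it is a linear subspace by construction, and it is closed under multiplication because $(z_1\1+j_1)(z_2\1+j_2)=z_1z_2\1+(z_1j_2+z_2j_1+j_1j_2)$, where $z_1j_2+z_2j_1+j_1j_2\in\Jc$ using that $\Jc$ is a left ideal (so $j_1j_2\in\Jc$) and a linear subspace. Combining this with the inclusion $(\CC\1+\Jc)^\times\subseteq(\CC\1+\Jc)\cap\Ac^\times$, which holds for any unital subalgebra, we obtain $(\CC\1+\Jc)^\times=(\CC\1+\Jc)\cap\Ac^\times$, i.e.\ $\CC\1+\Jc$ is inverse-closed in $\Ac$.

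There is essentially no obstacle here: the only subtlety is the degenerate case $z=0$, which must be disposed of before dividing by $z$, and the fact that only a left-ideal hypothesis is available (we never need $\Jc b\subseteq\Jc$ for $b$ on the right, since the relevant product $jb$ has $j\in\Jc$ on the left). The argument is purely algebraic and uses no topology, so completeness or Banach-algebra structure play no role.
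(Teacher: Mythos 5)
Your overall strategy is the same as the paper's, but the key computation contains a left/right mix-up that invalidates the step as written. You start from $ab=\1$, expand $(z\1+j)b=\1$, and obtain $zb=\1-jb$; you then claim $jb\in\Ac\Jc\subseteq\Jc$. This is false: the product $jb$ has $j\in\Jc$ on the \emph{left} and $b\in\Ac$ on the \emph{right}, so it lies in $\Jc\Ac$, and a left ideal (the hypothesis $\Ac\Jc\subseteq\Jc$) gives no control over $\Jc\Ac$. For a left ideal that is not two-sided, $jb$ need not belong to $\Jc$, so you cannot conclude $b\in\CC\1+\Jc$ from this identity. Your closing remark that ``the relevant product $jb$ has $j\in\Jc$ on the left'' has the roles of left and right ideals reversed; note that in your own treatment of the degenerate case $z=0$ you correctly use $ba=\1$ to land in $\Ac\Jc$.

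The repair is immediate and is exactly what the paper does: use the other half of invertibility, $ba=\1$, i.e. $b(z\1+j)=\1$, which gives $zb=\1-bj$ with $bj\in\Ac\Jc\subseteq\Jc$; then for $z\ne0$ one gets $b=\tfrac1z\1-\tfrac1z\,bj\in\CC\1+\Jc$, and the rest of your argument (the dichotomy $\Jc=\Ac$ versus $z\ne0$, and the verification that $\CC\1+\Jc$ is a unital subalgebra) goes through unchanged. With that one correction your proof coincides with the paper's.
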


\begin{proof}
To prove that $(\CC\1+\Jc)^\times\supseteq(\CC\1+\Jc)\cap\Ac^\times$, 
let $a\in (\CC\1+\Jc)\cap\Ac^\times$ arbitrary. 
Then there exist $\alpha\in\CC$, $a_0\in\Jc$ and $b\in\Ac$ such that 
$a=\alpha\1+a_0$ and $ab=ba=\1$. 
Then $b(\alpha\1+a_0)=\1$, 
hence $\alpha b=\1-ba_0\in\CC\1+\Ac\Jc\subseteq\CC\1+\Jc$. 

Now, if $\Jc=\Ac$, then the assertion is trivial. 
Otherwise, $\Jc$ cannot contain invertible elements, hence $\alpha\ne0$, and then the above relation implies 
$b(=a^{-1})\in\CC\1+\Jc$, hence $a\in(\CC\1+\Jc)^\times$. 
\end{proof}

In the Step~$2^\circ$ of the proof of the following lemma we use an idea from the proof of \cite[Th.~2.13(3)]{BB12}. 
See Remark~\ref{referee2} for an alternative proof and additional information on specific applications. 

\begin{lemma}\label{L2}
Let $\Bc$ be any unital associative Banach algebra. 
Assume that $\Ac_0$ is another associative Banach algebra such that there exists a continuous injective homomorphism $\Ac_0\hookrightarrow\Bc$. 
Moreover, let $\Jc$ be a left ideal of $\Ac_0$ with the following properties: 
\begin{itemize}
\item $\Jc$ is dense in $\Ac_0$; 
\item $(\CC\1+\Jc)^\times=(\CC\1+\Jc)\cap\Bc^\times$. 
\end{itemize}
Then we have $(\CC\1+\Ac_0)^\times
=(\CC\1+\Ac_0)\cap\Bc^\times$.
\end{lemma}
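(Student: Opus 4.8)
The inclusion $(\CC\1+\Ac_0)^\times\subseteq(\CC\1+\Ac_0)\cap\Bc^\times$ is automatic, so the task is to prove the reverse inclusion: if $a=\lambda\1+a_0$ with $\lambda\in\CC$, $a_0\in\Ac_0$, and $a$ is invertible in $\Bc$, then $a^{-1}\in\CC\1+\Ac_0$. The plan is to split into the trivial case $\Ac_0=\CC\1+\Jc$ (where $\CC\1+\Ac_0=\CC\1+\Jc$ and the conclusion is exactly the second hypothesis), and the main case where $\Jc$ is a \emph{proper} dense left ideal. I would organize the argument in two steps.

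\emph{Step $1^\circ$: reduce to showing $\lambda\1+a_0\in(\CC\1+\Jc)^\times$.} First observe that since $\Jc$ is proper, $\CC\1+\Jc$ is a proper subalgebra and $\lambda\ne 0$ forces nothing yet, but the key point is that once we know $a\in(\CC\1+\Jc)^\times$, the hypothesis $(\CC\1+\Jc)^\times=(\CC\1+\Jc)\cap\Bc^\times$ is not quite enough by itself --- rather, $a\in(\CC\1+\Jc)^\times$ directly gives $a^{-1}\in\CC\1+\Jc\subseteq\CC\1+\Ac_0$, which is what we want. So the whole problem reduces to proving that $a$, assumed invertible in $\Bc$, is already invertible in the smaller algebra $\CC\1+\Jc$. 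Writing $b=a^{-1}\in\Bc$, I would next show $\lambda\ne 0$: if $\lambda=0$ then $a=a_0\in\Ac_0$, and using density of $\Jc$ in $\Ac_0$ together with the embedding $\Ac_0\hookrightarrow\Bc$ and the fact that $\Bc^\times$ is open, one can perturb $a_0$ slightly within $\Ac_0$ to an element $a_0'\in\Jc$ that is still invertible in $\Bc$; but then $\1\1+(a_0'-\1)$... more cleanly, one writes $a_0' = \mu\1 + (a_0'-\mu\1)$ for any $\mu$, and arranges via density that $a_0'\in(\CC\1+\Jc)\cap\Bc^\times=(\CC\1+\Jc)^\times$ --- a contradiction with $\Jc$ being proper since $a_0'\in\Jc$. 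Hence $\lambda\ne 0$, and after dividing we may assume $\lambda=1$, i.e. $a=\1+a_0$.

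\emph{Step $2^\circ$: the approximation argument (following \cite[Th.~2.13(3)]{BB12}).} With $a=\1+a_0$ invertible in $\Bc$ and $b=a^{-1}=\1+(b-\1)$, I would use density of $\Jc$ in $\Ac_0$ to pick $a_1\in\Jc$ close to $a_0$ in the $\Ac_0$-norm, so that $\1+a_1$ is close to $a=\1+a_0$ in the $\Bc$-norm (using continuity of $\Ac_0\hookrightarrow\Bc$); by openness of $\Bc^\times$, for $a_1$ close enough, $\1+a_1\in\Bc^\times$, so $\1+a_1\in(\CC\1+\Jc)\cap\Bc^\times=(\CC\1+\Jc)^\times$, giving $(\1+a_1)^{-1}\in\CC\1+\Jc$. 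Write $(\1+a_1)^{-1}=\nu\1+j$ with $\nu\in\CC$, $j\in\Jc$. Now $a^{-1}=(\1+a_1)^{-1}\bigl(\1 + (a-(\1+a_1))(\1+a_1)^{-1}\bigr)^{-1}$; since $a-(\1+a_1)=a_0-a_1\in\Ac_0$ and $\Jc$ is a left ideal with $\Ac_0\Jc\subseteq\Jc$... the cleaner route: set $c=(\1+a_1)^{-1}a = \1 + (\1+a_1)^{-1}(a_0-a_1)$, which differs from $\1$ by an element of $\CC\1+\Jc$ times something --- one computes $(\1+a_1)^{-1}(a_0-a_1)\in(\CC\1+\Jc)(a_0-a_1)$, and since $(a_0-a_1)\in\Ac_0$ one does not immediately land in $\Jc$. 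The correct bookkeeping is to multiply on the \emph{right}: since $\Jc$ is a \emph{left} ideal, $\Ac_0\Jc\subseteq\Jc$ but $\Jc\Ac_0$ need not be in $\Jc$; so I would instead consider $c = a(\1+a_1)^{-1} = \1+(a_0-a_1)(\1+a_1)^{-1}$ and note $(a_0-a_1)(\1+a_1)^{-1}=(a_0-a_1)(\nu\1+j)=\nu(a_0-a_1)+(a_0-a_1)j$; the first term is in $\Ac_0$ and small, the second is in $\Ac_0\Jc\subseteq\Jc$. Hmm --- this shows $c=\1+(\text{small element of }\Ac_0)$, and for $a_1$ close enough $c$ is invertible in $\Bc$ with $\|c-\1\|<1$, so $c^{-1}=\sum_{n\ge0}(\1-c)^n\in\CC\1+\overline{\Ac_0}$... but we need $c^{-1}\in\CC\1+\Ac_0$, which requires $\Ac_0$ to be closed under this Neumann series, true since $\Ac_0$ is a Banach algebra and $\1-c\in\Ac_0$: the series $\sum_{n\ge1}(\1-c)^n$ converges in $\Ac_0$ (as $\Ac_0\hookrightarrow\Bc$ is continuous and injective, norms are comparable on the relevant elements... actually one must be careful here).

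\emph{Main obstacle.} The delicate point --- and the place where the idea from \cite{BB12} is essential --- is precisely this last convergence issue: one knows $\1-c$ is \emph{small in the $\Bc$-norm} but to sum the Neumann series \emph{inside} $\CC\1+\Ac_0$ one needs smallness in the $\Ac_0$-norm, and the embedding $\Ac_0\hookrightarrow\Bc$ is only continuous, not bicontinuous. The resolution is to choose $a_1$ close to $a_0$ in the $\Ac_0$-norm (which is legitimate, since that is where density lives), so that $\1-c$ is automatically small in the $\Ac_0$-norm; one must then check that $\nu$ and $j$ (hence $(a_0-a_1)j$) can be controlled, which uses that $j = (\1+a_1)^{-1}-\nu\1$ and that $\nu$ is bounded (e.g. $|\nu|=$ distance from $(\1+a_1)^{-1}$ to... ) --- in fact $\nu\1$ is the image of $(\1+a_1)^{-1}$ under the quotient map $\CC\1+\Jc\to(\CC\1+\Jc)/\overline{\Jc}\cong\CC$, so $|\nu|\le\|(\1+a_1)^{-1}\|$, which stays bounded as $a_1\to a_0$. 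Assembling these estimates so that $\1-c$ lies in $\Ac_0$ with $\Ac_0$-norm $<1$ is the crux; once that is done, $c^{-1}\in\CC\1+\Ac_0$, hence $a^{-1}=(\1+a_1)^{-1}c^{-1}=(\nu\1+j)c^{-1}\in(\CC\1+\Jc)(\CC\1+\Ac_0)\subseteq\CC\1+\Ac_0$ (again using $\Ac_0\Jc\subseteq\Jc\subseteq\Ac_0$ and $\Ac_0\Ac_0\subseteq\Ac_0$), completing the proof. I would relegate the routine parts of these estimates to a sentence and point the reader to \cite{BB12} and Remark~\ref{referee2} for the alternative route.
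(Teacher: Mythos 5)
Your Step $2^\circ$ contains a genuine gap, and it is precisely at the point you flag as ``the crux'': it cannot be closed in the form you propose. You factor $a=c\,(\1+a_1)$ with $c=\1+(a_0-a_1)(\1+a_1)^{-1}$ and plan to invert $c$ by a Neumann series inside $\CC\1+\Ac_0$, which requires $\Vert(a_0-a_1)(\1+a_1)^{-1}\Vert_{\Ac_0}<1$. Writing $(\1+a_1)^{-1}=\nu\1+j$ with $j\in\Jc$, the term $(a_0-a_1)j$ is bounded in $\Ac_0$-norm only by $\Vert a_0-a_1\Vert_{\Ac_0}\,\Vert j\Vert_{\Ac_0}$, and there is no available bound on $\Vert j\Vert_{\Ac_0}$: the hypothesis $(\CC\1+\Jc)^\times=(\CC\1+\Jc)\cap\Bc^\times$ is purely algebraic and gives no control on the $\Ac_0$-norm of the inverse; only the $\Bc$-norm of $(\1+a_1)^{-1}$ stays bounded, and the embedding $\Ac_0\hookrightarrow\Bc$ is continuous in one direction only. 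Since $j$ depends on $a_1$, shrinking $\Vert a_0-a_1\Vert_{\Ac_0}$ does not help --- the corresponding $\Vert j\Vert_{\Ac_0}$ may blow up --- so the product cannot be forced below $1$. (Your preliminary reduction ``$a$ is invertible in $\CC\1+\Jc$'' is also not literally meaningful, since $a$ need not belong to $\CC\1+\Jc$; but that is a side remark you do not actually use.)

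The paper's proof avoids the obstruction by putting the Neumann series on the \emph{other} factor: with $\bar a\in\Jc$ and $\Vert a_0-\bar a\Vert_{\Ac_0}<1/2$, one writes $\1+a_0=(\1+(a_0-\bar a))\,k$ where $(\1+(a_0-\bar a))^{-1}\in\CC\1+\Ac_0$ by a series whose convergence is controlled directly by the density estimate, while $k=\1+(\1+(a_0-\bar a))^{-1}\bar a$ lies in $\CC\1+\Jc$ because $\Jc$ is a left ideal and lies in $\Bc^\times$, so $k^{-1}\in\CC\1+\Jc$ by the second hypothesis --- no norm estimate on this inverse is ever needed. Your argument can be repaired by the same device one level down, namely by factoring $c=(\1+\nu(a_0-a_1))\bigl(\1+(\1+\nu(a_0-a_1))^{-1}(a_0-a_1)j\bigr)$ and inverting the second factor via the hypothesis rather than a series; but that substitution is exactly the idea your write-up is missing. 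Your treatment of the case $\lambda=0$, by contrast, is correct: showing that $\Ac_0\cap\Bc^\times\ne\emptyset$ forces $\1\in\Jc$ and hence $\Jc=\Ac_0$ is the alternative route recorded in Remark~\ref{referee2}, whereas the paper's Step $2^\circ$ instead reduces to $\lambda\ne0$ by a Cauchy integral over the circle $\vert\alpha\vert=\varepsilon$.
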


\begin{proof}
The inclusion ``$\subseteq$'' is clear. 

To prove the converse inclusion ``$\supseteq$'', let $\alpha\1+a_0\in(\CC\1+\Ac_0)\cap\Bc^\times$ arbitrary, 
where $\alpha\in\CC$ and $a_0\in\Ac_0$. 
We will proceed in two steps. 

Step $1^\circ$ 
First assume $\alpha\ne0$.  
Then we may (and do) assume $\alpha=1$. 

The ideal $\Jc$ is dense in $\Ac_0$, hence there exists $\bar a\in\Jc$ with $\Vert\bar a-a_0\Vert_{\Ac_0}<1/2$. 
Then $\1+(a_0-\bar a)\in(\CC\1+\Ac_0)^\times$, so we can consider the element 
$$k:=(\1+(a_0-\bar a))^{-1}(\1+a_0)
=(\1+(a_0-\bar a))^{-1}(\1+(a_0-\bar a)+\bar a)
=\1+(\1+(a_0-\bar a))^{-1}\bar a. $$
Here $k\in(\CC\1+\Ac_0)^\times$ and $\bar a\in\Jc$, 
hence actually 
$$k\in(\CC\1+\Ac_0)^\times\cap(\CC\1+\Jc)=(\CC\1+\Jc)^\times,$$ 
where the latter equality follows by Lemma~\ref{L1} for $\Ac=\CC\1+\Ac_0$. 

Thus $k^{-1}\in\CC\1+\Jc$, and then we get by the definition of $k$ that 
$$(\1+a_0)^{-1}=k^{-1}(\1+(a_0-\bar a))^{-1}\in\CC\1+\Ac_0$$ 
and this concludes the first step of the proof. 

Step $2^\circ$ 
Now assume $\alpha=0$, hence $a_0\in(\CC\1+\Ac_0)\cap\Bc^\times$. 
Since $\Bc$ is a Banach algebra, it follows that its set of invertible elements $\Bc^\times$ is 
an open subset, hence there exists $\varepsilon>0$ for which 
for every $\alpha\in\CC$ with $\vert\alpha\vert\le\varepsilon$ we have 
$\alpha\1+a_0\in(\CC\1+\Ac_0)\cap\Bc^\times$. 
Then by the conclusion of Step~$1^\circ$ above we have 
$(\alpha\1+a_0)^{-1}\in \CC\1+\Ac_0$ if $\alpha\in\CC$ with $\vert\alpha\vert=\varepsilon$. 
Now we may use the holomorphic functional calculus in the unital Banach algebra $\Bc$ 
to write 
$$a_0^{-1}=\frac{1}{2\pi\ie}\int\limits_{\vert\alpha\vert=\varepsilon}(\alpha\1+a_0)^{-1}\de\alpha$$
and this implies $a_0^{-1}\in\CC\1+\Ac_0$
since the integral from the right-hand side is convergent both in the norm of $\Bc$ and in the norm of $\CC\1+\Ac_0$, 
and the values of its integrand belong to $\CC\1+\Ac_0$. 

Consequently $(\CC\1+\Ac_0)^\times\supseteq(\CC\1+\Ac_0)\cap\Bc^\times$, and we are done. 
\end{proof}

For the following theorem we recall the notation introduced in Definition~\ref{k1}. 
We also denote by $\Ac(G)$ the image of the canonical representation~\eqref{intop}
for $\Dc_0=\CC$. 
It follows by Proposition~\ref{mult} that $\Ac(G)$ has a natural structure of involutive associative Banach algebra.  
Specifically, $\Ac(G)$ is endowed with the norm obtained by transporting the norm of 
the Banach algebra of integral kernels $\Kern(G,\CC)$ via the canonical representation~\eqref{intop}, 
which is thus turned into an isometric $*$-isomorphism $\Kern(G,\CC)\mathop{\to}\limits^\sim\Ac(G)$. 
Since the canonical $*$-representation is contractive by Proposition~\ref{mult}, 
we see that the inclusion map $\Ac(G)\hookrightarrow\Bc(\Hc)$ 
is a contractive $*$-homomorphism of involutive Banach algebras.

\begin{theorem}\label{kurbatov}
Let $G$ be any locally compact group. 
For every linear subspace $\Fc\subseteq\Kern(G,\CC)$ define 
$$\Ac_{\Fc}(G):=\{T_K\mid K\in\Kern_{\Fc}(G,\CC)\}\subseteq\Bc(L^2(G)).$$
Then the following assertions hold true: 
\begin{enumerate}
\item\label{kurbatov_item1} If $\Fc$ is any subalgebra (right/left/two-sided ideal, respectively) 
of the convolution algebra $L^1(G)$, 
then $\Ac_{\Fc}(G)$ is a subalgebra (right/left/two-sided ideal, respectively) 
of the algebra of integral operators $\Ac(G)\subseteq \Bc(L^2(G))$. 
\item\label{kurbatov_item2} If $\CC\1+\Ac(G)$ is an inverse-closed subalgebra of $\Bc(L^2(G))$, 
then $\Ac_{\Fc}(G)$ is an inverse-closed subalgebra of $\Bc(L^2(G))$ for every right/left 
ideal $\Fc$ of $L^1(G)$.  
\item\label{kurbatov_item3} Conversely, if there exists a right/left ideal $\Fc$ of $L^1(G)$ 
for which the subalgebra $\CC\1+\Ac_{\Fc}(G)$ of $\Bc(L^2(G))$ is inverse closed 
and for all $0\le\beta\in L^1(G)$ there exists a sequence $\{\beta_n\}_{n\ge 1}$ in $\Fc$ 
with $0\le\beta_n\le\beta$ and $\lim\limits_{n\to\infty}\beta_n=\beta$ almost everywhere on~$G$, 
then $\CC\1+\Ac(G)$ is inverse closed in $\Bc(L^2(G))$. 
\end{enumerate}
\end{theorem}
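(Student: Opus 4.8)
The plan is to establish the three assertions in order, since each builds naturally on the previous ones and on the structural results already proved.

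For assertion \eqref{kurbatov_item1}, I would argue directly at the level of the integral kernels, transporting everything through the isometric $*$-isomorphism $\Kern(G,\CC)\simeq\Ac(G)$. The key observation is that $\Kern_{\Fc}(G,\CC)$ is precisely the set of kernels $K$ dominated (in norm) by $|\beta(xy^{-1})|$ for some $\beta\in\Fc$, and that the domination passes through the convolution structure: if $\|K_1(x,y)\|\le|\beta_1(xy^{-1})|$ and $\|K_2(x,y)\|\le|\beta_2(xy^{-1})|$, then
$$\|(K_1\star K_2)(x,z)\|\le\int_G|\beta_1(xy^{-1})|\,|\beta_2(yz^{-1})|\,\de y=(|\beta_1|\star|\beta_2|)(xz^{-1}).$$
Hence if $\Fc$ is a subalgebra of $L^1(G)$ (noting $|\beta_1|\star|\beta_2|\in\Fc$ when $\beta_1,\beta_2\in\Fc$, since $\Fc$ being a subspace closed under convolution also contains $|\beta|\star|\gamma|$ once it contains the moduli — here one must be slightly careful and either assume $\Fc$ is closed under taking absolute values or replace $\Fc$ by the smallest such space, a point I would address in a remark), then $K_1\star K_2\in\Kern_{\Fc}(G,\CC)$. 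The ideal cases are identical, using $\beta\star L^1(G)\subseteq\Fc$ or the symmetric statement. Applying the canonical representation~\eqref{intop} transports this to the statement about $\Ac_{\Fc}(G)$ inside $\Ac(G)$.

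For assertion \eqref{kurbatov_item2}, the strategy is to invoke Lemma~\ref{L1}. By assertion~\eqref{kurbatov_item1}, $\Ac_{\Fc}(G)$ is a (say) left ideal in $\Ac(G)$; however Lemma~\ref{L1} requires a left ideal in a \emph{unital} algebra, so I would work inside $\CC\1+\Ac(G)\subseteq\Bc(L^2(G))$ and observe that $\Ac_{\Fc}(G)$ is a left ideal there as well (since $\1\cdot K=K\in\Ac_{\Fc}(G)$ trivially and $\Ac(G)\cdot\Ac_{\Fc}(G)\subseteq\Ac_{\Fc}(G)$). Lemma~\ref{L1} then gives that $\CC\1+\Ac_{\Fc}(G)$ is inverse-closed in $\CC\1+\Ac(G)$, and composing with the hypothesis that $\CC\1+\Ac(G)$ is inverse-closed in $\Bc(L^2(G))$ yields the claim by transitivity of inverse-closedness.

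Assertion \eqref{kurbatov_item3} is the converse direction and the main obstacle, because it requires passing from the smaller ideal back to the full algebra $\Ac(G)$; this is exactly what Lemma~\ref{L2} is designed for. The plan is to set $\Bc=\Bc(L^2(G))$, $\Ac_0=\Ac(G)$, and $\Jc=\Ac_{\Fc}(G)$, which by assertion~\eqref{kurbatov_item1} is a left ideal of $\Ac_0$ satisfying $(\CC\1+\Jc)^\times=(\CC\1+\Jc)\cap\Bc^\times$ by hypothesis. The nontrivial hypothesis of Lemma~\ref{L2} to verify is that $\Jc$ is \emph{dense} in $\Ac_0$ in the Banach algebra norm, and this is precisely where the approximation condition on $\Fc$ (the existence of $0\le\beta_n\le\beta$ in $\Fc$ with $\beta_n\to\beta$ a.e.) enters. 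Given $K\in\Kern(G,\CC)$ with $\|K(x,y)\|\le|\beta(xy^{-1})|$ for some $0\le\beta\in L^1(G)$, I would take the corresponding $\beta_n\in\Fc$ and define $K_n$ by truncating $K$ to the set where $|\beta(xy^{-1})|\le\beta_n(xy^{-1})$ (or more precisely, $K_n(x,y)=K(x,y)$ when $\beta(xy^{-1})>0$ and $|\beta(xy^{-1})|\le\beta_n(xy^{-1})$, and $0$ otherwise); then $K_n\in\Kern_{\Fc}(G,\CC)$ and $\|K-K_n\|_{\Kern(G,\CC)}\le\|\beta-\beta_n\|_{L^1(G)}\to0$ by dominated convergence. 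Transporting through the isometric isomorphism $\Kern(G,\CC)\simeq\Ac(G)$ shows $\Ac_{\Fc}(G)$ is dense in $\Ac(G)$. Lemma~\ref{L2} then delivers $(\CC\1+\Ac(G))^\times=(\CC\1+\Ac(G))\cap\Bc(L^2(G))^\times$, which is exactly the asserted inverse-closedness. The delicate point throughout is keeping track of whether statements are about the abstract normed algebra $\Ac(G)$ or its concrete realization in $\Bc(L^2(G))$, but since the canonical representation is a contractive injective $*$-homomorphism and we only ever need the \emph{algebraic} notion of inverse-closedness (not isometry), these identifications cause no real difficulty.
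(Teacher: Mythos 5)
Your overall architecture coincides with the paper's: part (1) by domination of kernels through convolution, part (2) by Lemma~\ref{L1} applied inside $\CC\1+\Ac(G)$ followed by transitivity, and part (3) by proving that $\Ac_{\Fc}(G)$ is dense in $\Ac(G)$ and then invoking Lemma~\ref{L2}. Parts (1) and (2) match the paper; your caveat in (1) about $\vert\beta_1\vert\star\vert\beta_2\vert$ versus $\beta_1\star\beta_2$ is a legitimate observation (the paper silently takes the dominating functions nonnegative, and in the intended applications $\Fc$ is solid), so no harm there.

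The one step that fails as written is the truncation in part (3). You set $K_n=K$ on the set where $\beta(xy^{-1})\le\beta_n(xy^{-1})$ and $K_n=0$ elsewhere; since $\beta_n\le\beta$, that set is exactly where $\beta_n=\beta$, so $\vert K-K_n\vert\le\beta(xy^{-1})\,\chi_{\{\beta_n<\beta\}}(xy^{-1})$, which is \emph{not} bounded by $(\beta-\beta_n)(xy^{-1})$ and need not tend to $0$: for instance, if the hypothesis happens to supply $\beta_n=(1-\tfrac1n)\beta$ (which satisfies $0\le\beta_n\le\beta$ and $\beta_n\to\beta$ everywhere), then $\{\beta_n<\beta\}=\{\beta>0\}$ for all $n$, your $K_n$ vanishes identically, and $\Vert K-K_n\Vert_{\Kern(G,\CC)}=\Vert K\Vert$ does not go to $0$. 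The estimate $\Vert K-K_n\Vert\le\Vert\beta-\beta_n\Vert_{L^1(G)}$ you want requires the \emph{multiplicative} truncation instead: write $\beta_n=a_n\beta$ with a measurable $0\le a_n\le1$ and set $K_n(x,y)=a_n(xy^{-1})K(x,y)$. Then $\vert K_n(x,y)\vert\le\beta_n(xy^{-1})$, so $K_n\in\Kern_{\Fc}(G,\CC)$, and $\vert(K-K_n)(x,y)\vert\le\bigl(1-a_n(xy^{-1})\bigr)\beta(xy^{-1})=(\beta-\beta_n)(xy^{-1})$, whence $\Vert K-K_n\Vert_{\Kern(G,\CC)}\le\Vert\beta-\beta_n\Vert_{L^1(G)}\to0$ by dominated convergence. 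This is exactly the paper's construction; with it in place, the rest of your argument for (3) goes through unchanged.
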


\begin{proof}
 For Assertion~\eqref{kurbatov_item1}, just note that if $K_j\in\Kern(G,\CC)$, $0\le\beta_j\in L^1(G)$, 
and $\vert K_j(x,y)\vert\le\beta_j(xy^{-1})$ for almost all $x,y\in G$ and $j=1,2$, 
then 
$$\vert (K_1\star K_2)(x,y)\vert\le(\beta_1\star\beta_2)(xy^{-1}) 
\text{ and }
\vert (K_1+ K_2)(x,y)\vert\le(\beta_1+\beta_2)(xy^{-1})$$ 
for almost all $x,y\in G$. 

Assertion~\eqref{kurbatov_item2} follows by Lemma~\ref{L1} and its obvious version for right ideals. 

For Assertion~\eqref{kurbatov_item3} we may assume that $\Fc$ is a left ideal, 
since the case of right ideals can be treated similarly. 
Let $K\in\Kern(G,\CC)$ and $0\le\beta\in L^1(G)$ arbitrary with 
$\vert K(x,y)\vert\le\vert\beta(xy^{-1})\vert$ 
for almost all $x,y\in G$. 
By the hypothesis, there exists a sequence $\{\beta_n\}_{n\ge 1}$ in $\Fc$ 
with $0\le\beta_n\le\beta$ and $\lim\limits_{n\to\infty}\beta_n=\beta$ almost everywhere on~$G$. 
For every $n\ge1$, let $a_n$ be any measurable function on $G$ with $0\le a_n\le 1$ 
and $\beta_n=a_n\beta$. 
The function $a_n$ is uniquely determined almost everywhere on the set where $\beta$ does not vanish. 
Now define 
$$K_n\colon G\times G\to\CC,\quad K_n(x,y)=a_n(xy^{-1})K(x,y).$$
Then we have $\vert K_n(x,y)\vert\le(a_n\beta)(xy^{-1})=\beta_n(xy^{-1})$ for almost all $x,y\in G$. 
Since $\beta_n\in\Fc$, it follows that $K\in\Kern_{\Fc}(G,\CC)$. 

On the other hand, for almost all $x,y\in G$ we have 
$$\vert(K-K_n)(x,y)\vert=\vert(1-a_n)(xy^{-1})K(x,y)\vert\le((1-a_n))\beta)(xy^{-1})=(\beta-\beta_n)(xy^{-1}) $$
and this implies $\Vert K-K_n\Vert_{\Kern(G,\CC)}\le\Vert\beta-\beta_n\Vert_{L^1(G)}$. 
The conditions satisfied by $\{\beta_n\}_{n\ge 1}$ entail that 
$\lim\limits_{n\to\infty}\Vert\beta-\beta_n\Vert_{L^1(G)}=0$ 
by Lebesgue's dominated convergence theorem, 
and therefore $\lim\limits_{n\to\infty}\Vert K-K_n\Vert_{\Kern(G,\CC)}=0$. 

Consequently, $\Kern_{\Fc}(G,\CC)$ is dense in $\Kern(G,\CC)$, 
and this implies that $\Ac_{\Fc}(G)$ is dense in the Banach algebra $\Ac(G)$. 
Moreover, since $\Fc$ is a left ideal of the convolution algebra $L^1(G)$, 
it follows by Assertion~\eqref{kurbatov_item1} that $\Ac_{\Fc}(G)$ is a left ideal of $\Ac(G)$. 
Since the hypothesis ensures that $\Ac_{\Fc}(G)$ is an inverse-closed subalgebra of $\Bc(L^2(G))$, 
it follows that Lemma~\ref{L2} can be applied for 
$\Ac_0=\Ac(G)$, $\Jc=\Ac_{\Fc}(G)$, and $\Bc=\Bc(L^2(G))$, 
to obtain that $\Ac(G)$ is an inverse-closed subalgebra of $\Bc(L^2(G))$. 
This completes the proof. 
\end{proof}

\begin{corollary}\label{kurbatov_cor}
Let $G$ be any locally compact group. 
If there exists a left/right ideal $\Fc\subseteq L^1(G)$ that contains every function in $L^\infty(G)$ with compact essential support, 
and for which 
$\CC\1+\Ac_{\Fc}(G)$ is an inverse-closed subalgebra of $\Bc(L^2(G))$, 
then also $\CC\1+\Ac(G)$ is an inverse-closed subalgebra of $\Bc(L^2(G))$. 
\end{corollary}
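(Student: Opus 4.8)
The plan is to derive Corollary~\ref{kurbatov_cor} as an immediate consequence of Theorem~\ref{kurbatov}\eqref{kurbatov_item3}. The only thing to check is that the hypothesis of that assertion is met, namely that the ideal $\Fc$ satisfies the approximation property: for every $0\le\beta\in L^1(G)$ there exists a sequence $\{\beta_n\}_{n\ge1}$ in $\Fc$ with $0\le\beta_n\le\beta$ and $\beta_n\to\beta$ almost everywhere on $G$.

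First I would fix $0\le\beta\in L^1(G)$. By inner regularity of the Haar measure, choose an increasing sequence of compact sets $\{Q_n\}_{n\ge1}$ in $G$ with $\lim_{n\to\infty}\beta\cdot\chi_{Q_n}=\beta$ almost everywhere (for instance $Q_n$ exhausting a $\sigma$-compact set of full measure for the support of $\beta$), and set $\beta_n:=\min\{\beta,n\}\cdot\chi_{Q_n}$. Then each $\beta_n$ is a bounded function with compact essential support, hence lies in $L^\infty(G)$ with compact essential support and therefore in $\Fc$ by the hypothesis of the corollary. By construction $0\le\beta_n\le\beta$, and since $\min\{\beta,n\}\to\beta$ pointwise and $\chi_{Q_n}\to1$ almost everywhere on $\{\beta\ne0\}$, we get $\beta_n\to\beta$ almost everywhere on $G$.

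Having produced such a sequence, the hypotheses of Theorem~\ref{kurbatov}\eqref{kurbatov_item3} are satisfied for the left/right ideal $\Fc$, so that theorem yields directly that $\CC\1+\Ac(G)$ is an inverse-closed subalgebra of $\Bc(L^2(G))$, which is the conclusion. There is no serious obstacle here; the only mild subtlety is arranging the exhaustion $Q_n$ correctly in a general (not necessarily $\sigma$-compact) locally compact group, which is handled by restricting attention to the $\sigma$-compact open subgroup generated by a neighborhood of the (essential) support of $\beta$, or equivalently by noting that $\beta$, being integrable, is supported up to null sets on a $\sigma$-compact set.

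\begin{proof}
By Theorem~\ref{kurbatov}\eqref{kurbatov_item3} it suffices to verify that the left/right ideal $\Fc$ has the following approximation property: for every $0\le\beta\in L^1(G)$ there exists a sequence $\{\beta_n\}_{n\ge1}$ in $\Fc$ with $0\le\beta_n\le\beta$ and $\lim\limits_{n\to\infty}\beta_n=\beta$ almost everywhere on $G$.

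Let $0\le\beta\in L^1(G)$. Since $\beta$ is integrable, there is a $\sigma$-compact measurable set $E\subseteq G$ with $\beta=0$ almost everywhere on $G\setminus E$; by inner regularity of the Haar measure we may choose an increasing sequence of compact sets $Q_1\subseteq Q_2\subseteq\cdots$ contained in $E$ with $\lim\limits_{n\to\infty}\chi_{Q_n}=1$ almost everywhere on $E$. Define
$$\beta_n:=\min\{\beta,n\}\cdot\chi_{Q_n}\qquad(n\ge1).$$
Then each $\beta_n$ belongs to $L^\infty(G)$ and has compact essential support, hence $\beta_n\in\Fc$ by the hypothesis on $\Fc$. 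Moreover $0\le\beta_n\le\beta$ for all $n$, and since $\min\{\beta,n\}\uparrow\beta$ pointwise and $\chi_{Q_n}\to1$ almost everywhere on $E$ while $\beta=0$ almost everywhere off $E$, we obtain $\lim\limits_{n\to\infty}\beta_n=\beta$ almost everywhere on $G$.

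Thus the approximation property holds, and Theorem~\ref{kurbatov}\eqref{kurbatov_item3} applies to the ideal $\Fc$, yielding that $\CC\1+\Ac(G)$ is an inverse-closed subalgebra of $\Bc(L^2(G))$.
\end{proof}
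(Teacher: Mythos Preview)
Your proof is correct and follows essentially the same approach as the paper's: both arguments reduce the corollary to Theorem~\ref{kurbatov}\eqref{kurbatov_item3} by verifying the approximation property for~$\Fc$, and both do so by truncating $\beta$ (taking $\min\{\beta,n\}$) and restricting to compact sets so as to land in $L^\infty_{\rm comp}(G)\subseteq\Fc$. The only cosmetic difference is that the paper first establishes $L^1$-convergence and then extracts an almost-everywhere convergent subsequence, whereas you construct an almost-everywhere convergent sequence directly via an exhausting family of compacts; your route is slightly more streamlined but not materially different.
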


\begin{proof}
Let $K\in\Kern(G,\CC)$ and $0\le \beta\in L^1(G)$ arbitrary with 
$\vert K(x,y)\vert\le \beta(xy^{-1})$ 
for almost all $x,y\in G$. 
Since the finite Borel measure $\beta(x)\de x$ is regular on $G$, 
it follows that for every $\varepsilon>0$ 
there exists a compact set $E_\varepsilon\subseteq G$
for which 
\begin{equation}\label{kurbatov_cor_proof_eq1}
0\le \int\limits_{G\setminus E_\varepsilon}\beta(x)\de x<\varepsilon.
\end{equation}
Now pick any continuous function with compact support $\phi_\varepsilon\colon G\to[0,1]$ 
with $\phi_\varepsilon\vert_{E_\varepsilon}\equiv1$, and define $\beta_\varepsilon=\phi_\varepsilon\beta$, 
so that $0\le\beta_\varepsilon\le\beta$, the support of $\beta_\varepsilon$ is compact, 
and by \eqref{kurbatov_cor_proof_eq1} also $\Vert\beta-\beta_\varepsilon\Vert_{L^1(G)}<\varepsilon$. 

Now for every $n\ge 1$ define $\beta_{\varepsilon,n}=\min\{\beta_\varepsilon,n\}$, 
so that $0\le\beta_{\varepsilon,n}\le\beta_\varepsilon\le\beta$ 
and $\lim\limits_{n\to\infty}\beta_{\varepsilon,n}=\beta_\varepsilon$ almost everywhere on~$G$. 
If $L^p_{\rm comp}(G)$ denotes the set of all functions in $L^p(G)$ with compact essential support, 
then $\beta_\varepsilon\in L^1_{\rm comp}(G)$, 
hence $\beta_{\varepsilon,n}\in L^\infty_{\rm comp}(G)\subseteq\Fc$ for all $n\ge 1$. 
Moreover, by Lebesgue's dominated convergence theorem we have 
$\lim\limits_{n\to\infty}\Vert\beta_{\varepsilon,n}-\beta_\varepsilon\Vert_{L^1(G)}=0$, 
hence there exists $n_\varepsilon\ge1$ with 
$\Vert\beta_{\varepsilon,n_\varepsilon}-\beta_\varepsilon\Vert_{L^1(G)}<\varepsilon$. 

Thus for every $\varepsilon>0$ we obtained the function 
$\psi_\epsilon:=\beta_{\varepsilon,n_\varepsilon}\in L^\infty_{\rm comp}(G)\subseteq\Fc$ 
with $0\le \psi_\varepsilon\le\beta$ and $\Vert\psi_\varepsilon-\beta\Vert_{L^1(G)}<2\varepsilon$. 
Now the sequence $\{\psi_{1/j}\}_{j\ge 1}$ is convergent to $\beta$ in $L^1(G)$, 
and then it has a subsequence which is convergent to $\beta$ almost everywhere. 
This shows that $\Fc$ satisfies the hypothesis of Theorem~\ref{kurbatov}\eqref{kurbatov_item3}, 
and an application of that theorem completes the present proof. 
\end{proof}

\begin{example}\label{amalgam}
\normalfont
Let $G$ be an abelian locally compact group. 
It was proved in \cite{Ku99}  and \cite{Ku01} that 
the hypothesis of the above Corollary~\ref{kurbatov_cor} 
is satisfied if $\Fc$ is the Wiener amalgam space $W(L^\infty,\ell^1)$ of \cite{Fe83}. 
The same property was then established for the reduced Heisenberg groups in \cite{FS10}. 
\end{example}

\begin{remark}\label{referee2}
\normalfont 
We will indicate here an alternative proof for Step~$2^\circ$ of Lema~\ref{L2} 
which is relevant for explaining the applicability of Theorem~\ref{kurbatov}, 
as discussed below. 

By using the notation from the aforementioned proof, 
if $\alpha=0$ then $a_0\in\Ac_0\cap\Bc^\times$. 
Since the inclusion map $\Ac_0\hookrightarrow \Bc$ is continuous 
and $\Bc^\times$ is open in $\Bc$, 
it follows that $\Ac_0\cap\Bc^\times$ is an open subset of $\Ac_0$.  
On the other hand, $\Jc$ is dense in $\Ac_0$ hence there exists $a_I\in\Jc\cap(\Ac_0\cap\Bc^\times)=\Jc\cap\Bc^\times\subseteq(\CC\1+\Jc)\cap\Bc^\times$. 
Then by the second hypothesis of Lema~\ref{L2} we obtain $a_I^{-1}\in\CC\1+\Jc\subseteq\CC\1+\Ac_0$. 
Thus $\CC\1+\Jc$ is a left ideal of $\CC\1+\Ac_0$ which contains invertible elements. 
Then it is well known and easy to check that $\CC\1+\Jc=\CC\1+\Ac_0$, that is, $\Jc=\Ac_0$, 
and in this case the conclusion coincides with the second hypothesis from the statement of the lemma. 
This ends the alternative proof of Step~$2^\circ$ of Lema~\ref{L2}. 

As a by-product of the above reasoning, 
in the setting of Lema~\ref{L2}, if we have $\Jc\subsetneqq\Ac_0$ 
(that is, the conclusion does not coincide with the second hypothesis) 
then necessarily $\Ac_0\cap\Bc^\times=\emptyset$, and in particular $\1\in\Bc\setminus\Ac_0$. 
In the special case of Theorem~\ref{kurbatov}\eqref{kurbatov_item3}, 
where $\Ac_0=\Ac(G)$, if the conclusion is nontrivial (i.e., different from the hypothesis) 
then $\Ac(G)$ cannot contain any invertible operator on $L^2(G)$, 
since otherwise every $\Fc$ which satisfies the hypothesis actually has the property $\Ac_{\Fc}(G)=\Ac(G)$. 
In particular, the latter situation happens if $G$ is a discrete group since the identity operator on $L^2(G)$ 
does belong to $\Ac(G)$ in that case. 

The above observations illustrate the fact that the interest in Theorem~\ref{kurbatov}\eqref{kurbatov_item3} 
does not come from its applications to discrete groups but rather 
from the fact that in some concrete situations (see Example~\ref{amalgam}) 
it essentially reduces the study of some algebras of integral operators on Lie groups as $\RR^n$ or the reduced Heisenberg groups 
to the study of operator algebras on some of their discrete co-compact subgroups.  
The situation of discrete groups can be dealt with by using Theorem~\ref{main}. 
\end{remark}

\section{Dense inverse-closed subalgebras of some $C^*$-algebras}\label{Sect5}

In this section we apply the above results to the elliptic algebra introduced in \cite[Sect. 6]{Ge11}
for any unimodular noncompact locally compact group $G$.  
We recall that the elliptic algebra is the $C^*$-algebra $\Ec(G)\subseteq \Bc(L^2(G))$ 
generated by the operators defined by integral kernels 
$K\in\RUCb(G\times G,\CC)$ satisfying the following controllability condition: 
\begin{itemize}
 \item There exists a compact set $S_K\subseteq G$ for which $K(x,y)=0$ if $xy^{-1}\not\in S_K$. 
\end{itemize}
To explain the terminology and point out the physical significance of the elliptic algebra, 
we recall from \cite{DG04} the following fact which holds true for the abelian Lie group $G=(\RR^n,+)$: 
Let $m\ge 1$ be any integer and $h\colon\RR^n\to\RR$ be any elliptic polynomial of order~$m$. 
Then the elliptic algebra $\Ec(G)$ coincides with the $C^*$-subalgebra of $\Bc(L^2(G))$ 
generated by the resolvents of all self-adjoint operators $h(\ie\nabla)+W$, 
where $W$ runs over the set of all symmetric differential operators of order strictly less than~$m$, 
whose coefficients are smooth functions which are bounded together their derivatives of arbitrarily high order.

Now return to the general case where $G$ is any unimodular noncompact locally compact group 
and denote by $\Kc(L^2(G))$ the $C^*$-algebra of compact operators on $L^2(G)$. 
It was already noted in \cite[subsect. 6.1]{Ge11} that $\Kc(L^2(G))\subseteq\Ec(G)\simeq \RUCb(G,\CC)\rtimes_r G$. 
In Theorem~\ref{final} below we 
show that the involutive Banach algebra $\Ac_{G,\CC}$ (see Theorem~\ref{main}) 
can be used in order to complete the foregoing relationship 
to a commutative diagram 
$$\xymatrix
{\Kc(L^2(G)) \ar@{^{(}->}[r] & \Ec(G)  & \RUCb(G,\CC)\rtimes_r G \ar@{<->}[l]\\
\Ac_{G,\CC}\cap\Kc(L^2(G)) \ar@{^{(}->}[u] \ar@{^{(}->}[r]  & \Ac_{G,\CC}  \ar@{^{(}->}[u] 
& L^1(G,\RUCb(G,\CC);\alpha) \ar@{<->}[l] \ar@{^{(}->}[u] 
}$$
consisting of continuous inclusion maps and isometric $*$-isomorphisms. 
The vertical arrows in that diagram have dense ranges and are inverse-closed after the algebras 
of the above diagram have been unitized, if the group $G$ is also amenable and symmetric. 

Since it is not the case that $\1\in\Ec(G)$ if the group $G$ is nondiscrete and amenable, 
we will need to introduce the unital $C^*$-algebra 
$$\Ec_1(G):=\CC\1+\Ec(G)\subseteq \Bc(L^2(G)).$$ 
We also need the unitary representations 
$\lambda,\rho\colon G\to \Bc(L^2(G))$ 
defined by 
$$(\lambda(a)\phi)(x)=\phi(a^{-1}x)\text{ and }(\rho(a)\phi)(x)=\phi(xa)$$ 
for $\phi\in L^2(G)$ and $a,x\in G$.  
For every unitary operator $V\colon L^2(G)\to L^2(G)$ define 
$$\Ad\,V\colon\Bc(L^2(G))\to\Bc(L^2(G)),\quad (\Ad\,V)T=VTV^{-1}.$$

For Theorem~\ref{final} we need the following remark.

\begin{remark}\label{prior}
\normalfont
If the canonical representation~\eqref{intop}, which is faithful by Proposition~\ref{mult}, 
namely 
$$\Kern(G,\CC)\to\Bc(L^2(G)), \quad K\mapsto T_K$$ 
is composed with the isometric $*$-homomorphism given by Proposition~\ref{R}, 
namely 
$$R\colon L^1(G,\RUCb(G,\CC);\alpha)\to \Kern(G,\CC)$$
then we obtain a $*$-isomorphism onto the $*$-algebra $\Ac_{G,\CC}$ from Theorem~\ref{main}
$$\Psi\colon L^1(G,\RUCb(G,\CC);\alpha)\mathop{\to}\limits^\sim\Ac_{G,\CC}, \quad 
f\mapsto T_{R(f)}$$ 
which will be used for defining a norm on $\Ac_{G,\CC}$, 
thus turning it into an involutive Banach algebra 
for which $\Psi$ is an isometry.  
Since the aforementioned canonical $*$-representation is contractive by Proposition~\ref{mult}, 
we also obtain the inclusion maps  
$$\Ac_{G,\CC}\hookrightarrow\Ac(G)\hookrightarrow\Bc(\Hc),$$ 
where $\Ac_{G,\CC}\hookrightarrow\Ac(G)$ is an isometric $*$-homomorphism 
and $\Ac(G)\hookrightarrow\Bc(\Hc)$
is a contractive $*$-homomorphism of involutive Banach algebras. 
\end{remark}

\begin{theorem}\label{final}
Let $G$ by any locally compact group $G$ which is noncompact, 
unimodular, amenable, and such that its discrete undelying group $G_d$ is rigidly symmetric.
Then we have 
\begin{enumerate}
 \item\label{final_item1} There exists a continuous inclusion $\Ac_{G,\CC}\hookrightarrow\Ec(G)$. 
 \item\label{final_item2} The intersection $\Ac_{G,\CC}\cap\Kc(L^2(G))$ is a dense $*$-subalgebra of
  $\Kc(L^2(G))$.  
 \item\label{final_item3} The unitization 
 $\CC\1+\Ac_{G,\CC}$ is a dense inverse-closed $*$-subalgebra of the unital $C^*$-algebra $\Ec_1(G)$.  
 \item\label{final_item4} The group $G$ acts by isometric $*$-automorphisms on the $C^*$-algebra $\Ec(G)$ 
 by both $\Ad\,\lambda(\cdot)$ and $\Ad\,\rho(\cdot)$. 
 The Banach algebra $\Ac_{G,\CC}$ is invariant under each of these actions, 
 which give rise to actions of $G$ by isometric $*$-automorphisms of $\Ac_{G,\CC}$,
  and moreover the mapping 
\begin{equation}\label{final_eq1}
G\times \Ac_{G,\CC}\to\Ac_{G,\CC}, \quad (a,S)\mapsto(\Ad\,\lambda(a))S
\end{equation}
is continuous. 
\end{enumerate}
\end{theorem}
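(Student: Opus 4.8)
The strategy is to read each assertion off the Banach-algebra machinery already set up: the isometric $*$-isomorphism $\Psi\colon L^1(G,\RUCb(G,\CC);\alpha)\xrightarrow{\sim}\Ac_{G,\CC}$ of Remark~\ref{prior}, together with the identification $\Ec(G)\simeq\RUCb(G,\CC)\rtimes_r G$ recalled from \cite{Ge11}. For \eqref{final_item1}, observe that under $\Psi^{-1}$ an element $T_{R(f)}$ with $f\in L^1(G,\RUCb(G,\CC);\alpha)$ comes from an integrable $\RUCb(G,\CC)$-valued function, and that the compactly supported functions $C_c(G,\RUCb(G,\CC))$ are dense in $L^1(G,\RUCb(G,\CC);\alpha)$; the operator $T_{R(f)}$ for such a compactly supported $f$ is an integral operator whose kernel $R(f)(x,y)=f(xy^{-1},x)$ is of the controllability type defining $\Ec(G)$ (it is $\RUCb$ in $(x,y)$ because $f(\cdot)$ is $\RUCb$-valued and vanishes for $xy^{-1}$ outside a compact set). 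Since $\RUCb(G,\CC)\rtimes_r G$ is by definition the $C^*$-closure of the regular representation of $L^1(G,\RUCb(G,\CC);\alpha)$, the inclusion $\Ac_{G,\CC}\hookrightarrow\Ec(G)$ is the continuous (indeed contractive, by Proposition~\ref{mult}) map induced on the dense subalgebra, and it lands in $\Ec(G)$ by the density argument just described.

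For \eqref{final_item2}, note that any $K\in\Kern(G,\CC)$ that is compactly supported and, say, continuous defines a Hilbert--Schmidt — hence compact — operator $T_K$ on $L^2(G)$, and all such $K$ lie in $\Ran R$ when restricted appropriately (using Proposition~\ref{R} and the fact that such $K$ are $\RUCb$ on $G\times G$); so $\Ac_{G,\CC}\cap\Kc(L^2(G))$ contains the operators with compactly supported continuous kernels. These are dense in $\Kc(L^2(G))$ by a standard approximate-identity / mollification argument on $L^2(G)$. For \eqref{final_item3}: $G$ is amenable so, as in the proof of Theorem~\ref{main} and via \cite[Th.~7.13]{Wi07}, the enveloping $C^*$-algebra of $L^1(G,\RUCb(G,\CC);\alpha)$ equals the closure of its regular representation, which is $\Ec(G)$; $G$ is symmetric and $\RUCb(G,\CC)$-coefficients give a covariance algebra which — here one needs $G$ rigidly symmetric, not merely symmetric, so this is the point where the hypothesis must be reexamined — is a symmetric Banach algebra by Corollary~\ref{leptin2}. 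Granting that, Proposition~\ref{univ} (in the unitized form, since $G$ noncompact and amenable means $\1\notin\Ec(G)$) gives inverse-closedness of $\CC\1+\rho_0(L^1(\cdots))=\CC\1+\Ac_{G,\CC}$ in $\Ec_1(G)$; density is immediate since $\rho_0$ has dense range in its enveloping $C^*$-algebra.

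For \eqref{final_item4}, compute directly: $\lambda(a)$ and $\rho(a)$ are unitaries, so $\Ad\,\lambda(a)$ and $\Ad\,\rho(a)$ are isometric $*$-automorphisms of $\Bc(L^2(G))$, and one checks on kernels that $(\Ad\,\lambda(a))T_K=T_{K'}$ with $K'(x,y)=K(a^{-1}x,a^{-1}y)$ and $(\Ad\,\rho(a))T_K=T_{K''}$ with $K''(x,y)=K(xa,ya)$; in both cases $|K'(x,y)|,|K''(x,y)|\le\beta(xy^{-1})$ whenever $|K|$ is so dominated (using unimodularity for $\rho$), so $\Kern(G,\CC)$ — hence $\Ac_{G,\CC}$ — is preserved, with the norm unchanged, and the same kernels are of controllability type so $\Ec(G)$ is preserved. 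Continuity of \eqref{final_eq1} is then the statement that $a\mapsto(\Ad\,\lambda(a))S$ is continuous; by the isometric invariance just proved it suffices to check this on the dense subalgebra of $S$ with compactly supported $\RUCb$-kernels, where left-translation acts continuously in the $\Kern$-norm precisely because the kernels come from $\RUCb(G,\CC)$-valued functions — this is exactly the defining continuity of the $C^*$-dynamical system $(\RUCb(G,\CC),G,\alpha)$ transported through $R$ and $\Psi$, combined with continuity of translation in $L^1(G)$ on the "$\beta$" factor.

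The main obstacle I anticipate is assertion \eqref{final_item3}: the chain of equivalences in Proposition~\ref{univ}/Lemma~\ref{biller} as used for Theorem~\ref{main} required $G$ \emph{rigidly} symmetric, whereas here only "symmetric" is hypothesized. One must either invoke a result specific to the coefficient algebra $\RUCb(G,\CC)$ (e.g. that its covariance algebra over a symmetric group is still symmetric, perhaps because $\RUCb(G,\CC)$ is commutative and one can reduce via Proposition~\ref{leptin1} to $L^1(G)\hotimes C_0$-type algebras where symmetry of $L^1(G)$ suffices), or check that the proof of Corollary~\ref{leptin2} goes through under the weaker hypothesis in this commutative-coefficient case; pinning down exactly which symmetry assumption is genuinely needed, and citing the right tensor-product symmetry result, is the delicate part. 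Everything else is bookkeeping with kernel estimates and density.
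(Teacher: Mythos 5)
Your route coincides with the paper's for all four assertions: the identification of $\Ec(G)$ with the reduced crossed product $\RUCb(G,\CC)\rtimes_r G$ from \cite{Ge11} for \eqref{final_item1} and \eqref{final_item3}, density of compactly supported kernels in $\Kc(L^2(G))$ for \eqref{final_item2}, and explicit kernel computations plus approximation on elementary tensors $\phi\otimes\psi$ for \eqref{final_item4}. The concern you raise about ``symmetric'' versus ``rigidly symmetric'' in \eqref{final_item3} is exactly on target, but be aware that the paper does not resolve it either: its proof simply invokes Theorem~\ref{main}, whose stated hypothesis is rigid symmetry, so either the hypothesis of Theorem~\ref{final} must be read as ``rigidly symmetric'' or one needs precisely the supplementary argument you sketch (symmetry of $L^1(G,\Ac;\alpha)$ for commutative $\Ac$ over a merely symmetric group). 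Note that the reduction in Proposition~\ref{leptin1} replaces $\RUCb(G,\CC)$ by the $C^*$-algebra generated by $\RUCb(G,\CC)\cup V(G)$, which is noncommutative in general, so commutativity of the coefficient algebra does not obviously rescue the argument. You cannot complete \eqref{final_item3} under the bare symmetry hypothesis with the tools in the paper, and neither does the paper.

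Beyond that there are two genuine (if small) gaps. In \eqref{final_item4}, showing that $\Kern(G,\CC)$ is preserved by $K\mapsto K(a^{-1}\cdot,a^{-1}\cdot)$ and $K\mapsto K(\cdot a,\cdot a)$ does not yet show that $\Ac_{G,\CC}$ is preserved, because $\Ac_{G,\CC}=\{T_K\mid K\in\Ran R\}$ and $\Ran R$ is a proper subspace of $\Kern(G,\CC)$ when $G$ is nondiscrete (Proposition~\ref{R}). One must verify that the transformed kernels still arise from $L^1(G,\RUCb(G,\CC);\alpha)$; the paper does this by computing $(R^{-1}(\rho(a)\otimes\rho(a))R)f(x,y)=f(x,ya)$ and $(R^{-1}(\lambda(a)\otimes\lambda(a))R)f(x,y)=f(a^{-1}xa,a^{-1}y)$ and using that $\RUCb(G,\CC)$ is invariant under both translations. (Your domination for the $\lambda$-action is also slightly off: one obtains $\vert K'(x,y)\vert\le\beta(a^{-1}xy^{-1}a)$, whose $L^1$-norm equals $\Vert\beta\Vert_{L^1}$ only by unimodularity.) Similarly, in \eqref{final_item2} you must justify that your continuous compactly supported kernels lie in $\Ran R$ and not merely in $\Kern(G,\CC)$: this is true because such kernels are uniformly continuous, but Proposition~\ref{R}\eqref{R_item3} warns that general continuous kernels in $\Kern(G,\CC)$ need not belong to $\Ran R$ for noncompact nondiscrete $G$; the paper sidesteps the issue by working only with the kernels $R(\phi\otimes\psi)(x,y)=\phi(xy^{-1})\psi(x)$, which are manifestly in $\Ran R$.
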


\begin{proof}
For Assertion~\eqref{final_item1} recall from \cite[Prop. 6.5]{Ge11} that 
the reduced crossed product $\RUCb(G)\rtimes_r G$ is $*$-isomorphic to $\Ec(G)$ 
by means of the mapping $T$ that maps an integral kernel to the corresponding integral operator. 
On the other hand, as noted in Remark~\ref{prior}, 
we have the isometric $*$-isomorphism 
$$\Psi\colon L^1(G,\RUCb(G,\CC);\alpha)\to\Ac_{G,\CC},\quad f\mapsto T_{R(f)}$$
which is given explicitly by 
$$((\Psi f)\phi)(x)=(T_{R(f)}\phi)(x)
=\int\limits_G (R(f))(x,y)\phi(y)\de y
=\int\limits_G f(xy^{-1},x)\phi(y)\de y $$
for all $f\in L^1(G,\RUCb(G,\CC);\alpha)$ and $\phi\in L^2(G)$, 
and this shows that $\Psi$ agrees with the $*$-homomorphism 
$\Lambda$ from \cite[subsect. 6.1]{Ge11}. 
Therefore the norm closure of $\Ran\Psi$ in $\Bc(L^2(G))$ is 
the reduced crossed product $\RUCb(G)\rtimes_r G$ 
(\cite[Th. 6.2]{Ge11}). 
Hence we have the continuous inclusion $\Ac_{G,\CC}\hookrightarrow\Ec(G)$ 
and the norm closure in $\Bc(L^2(G))$ of the unital $*$-algebra $\CC\1+\Ac_{G,\CC}$ is equal to $\Ec_1(G)$. 
It also follows by Theorem~\ref{main} that $\CC\1+\Ac_{G,\CC}$ is inverse-closed in $\Bc(L^2(G))$, 
hence it is also inverse closed in $\Ec_1(G)$, 
and this completes the proof of Assertion~\eqref{final_item3} as well.  

For proving Assertion~\eqref{final_item2} pick any $\phi,\psi\in\Cc(G,\CC)$ with compact supports. 
Then $(R(\phi\otimes\psi))(x,y)=\phi(xy^{-1})\psi(x)$ for all $x,y\in G$, 
hence the integral operator $T_{R(\phi\otimes\psi)}$ belongs to $\Ac_{G,\CC}\cap\Kc(L^2(G))$. 
Moreover, the integral operators of this type span a dense subspace of $\Kc(L^2(G))$ 
(see for instance \cite[Lemma 5.2.8]{Va85}). 

As regards Assertion~\eqref{final_item4}, 
it follows by \cite[Prop. 6.4--6.5]{Ge11} that $\Ec(G)$ is invariant under $\Ad\,\rho(\cdot)$, 
while the invariance of $\Ec(G)$ under $\Ad\,\lambda(\cdot)$ is a straightforward consequence of the definitions. 
For proving the assertions on $\Ac_{G,\CC}$ it is convenient to denote 
$$((\rho(a)\otimes\rho(a))K)(x,y)=K(xa,ya)\text{ and }((\lambda(a)\otimes\lambda(a))K)(x,y)=K(a^{-1}x,a^{-1}y)$$ 
for all $a,x,y\in G$ and $K\colon G\times G\to\CC$. 
Then it is easily checked that 
for every integral kernel $K\in\Kern(G,\CC)$, the corresponding integral operator $T_K\in\Bc(L^2(G))$ satisfies 
for arbitrary $a\in G$,  
\begin{equation}\label{final_proof_eq1}
 \rho(a)T_K\rho(a)^{-1}=T_{(\rho(a)\otimes\rho(a))K}
\text{ and }
\lambda(a)T_K\lambda(a)^{-1}=T_{(\lambda(a)\otimes\lambda(a))K}.
\end{equation}
Therefore, if $f\in L^1(G,\RUCb(G,\CC);\alpha)$, then for all $a,x,y\in G$ we have 
$$((\rho(a)\otimes\rho(a))R(f))(x,y)=(R(f))(xa,ya)=f(xy^{-1},xa)$$ 
(see Proposition~\ref{R}) and then 
$$((R^{-1}(\rho(a)\otimes\rho(a))R)(f))(x,y)=(\rho(a)\otimes\rho(a))R(f))(y,x^{-1}y)=f(x,ya).$$
Since the space $\RUCb(G,\CC)$ is invariant under right translations, 
it follows by the above formula that $(R^{-1}(\rho(a)\otimes\rho(a))R)(f)\in L^1(G,\RUCb(G,\CC);\alpha)$ 
and the mapping $f\mapsto  (R^{-1}(\rho(a)\otimes\rho(a))R)(f)$ is an isometry of $L^1(G,\RUCb(G,\CC);\alpha)$. 
Then by using the first equality in \eqref{final_proof_eq1} and the definition of $\Ac_{G,\CC}$ 
(see Theorem~\ref{main}), we obtain that $\Ac_{G,\CC}$ is invariant under $\Ad\,\rho(a)$, 
and the restriction of $\Ad\,\rho(a)$ to $\Ac_{G,\CC}$ is an isometric $*$-automorphism. 

Similarly, 
$$((\lambda(a)\otimes\lambda(a))R(f))(x,y)=(R(f))(a^{-1}x,a^{-1}y)=f(a^{-1}xy^{-1}a,a^{-1}x)$$ 
and then 
$$((R^{-1}(\lambda(a)\otimes\lambda(a))R)(f))(x,y)=(\lambda(a)\otimes\lambda(a))R(f))(y,x^{-1}y)=f(a^{-1}xa,a^{-1}y).$$
The space $\RUCb(G,\CC)$ is also invariant under left translations, 
hence by the above formula we have that $(R^{-1}(\lambda(a)\otimes\lambda(a))R)(f)\in L^1(G,\RUCb(G,\CC);\alpha)$ 
and the mapping $f\mapsto  (R^{-1}(\lambda(a)\otimes\lambda(a))R)(f)$ is an isometry of $L^1(G,\RUCb(G,\CC);\alpha)$. 
Then by using the second equality in \eqref{final_proof_eq1} and the definition of $\Ac_{G,\CC}$, 
we see that $\Ac_{G,\CC}$ is invariant under $\Ad\,\lambda(a)$, 
and the restriction of $\Ad\,\lambda(a)$ to $\Ac_{G,\CC}$ is an isometric $*$-automorphism. 

In addition, the action of $G$ by left translations on $\RUCb(G,\CC)$ is continuous (see Remark~\ref{syst}).  
By taking into account the above formulas, 
it follows that if 
$f=\phi\otimes\psi$ with $\phi\in L^1(G)$ and $\psi\in\RUCb(G)$, 
then we have 
\begin{equation}\label{final_proof_eq2}
\lim\limits_{a\to\1}\Vert (\Ad\,\lambda(a))T_{R(f)}-T_{R(f)}\Vert_{\Ac_{G,\CC}}=0.
\end{equation} 
Since the algebraic tensor product $L^1(G)\otimes\RUCb(G)$ is dense in the projective tensor product  
$L^1(G)\widehat{\otimes}\RUCb(G)=L^1(G,\RUCb(G,\CC);\alpha)$ 
and we already proved that the restriction of $\Ad\,\lambda(a)$ to $\Ac_{G,\CC}$ is an isometry for every $a\in G$, 
a standard approximation argument shows that \eqref{final_proof_eq2} holds true for any 
$f\in L^1(G,\RUCb(G,\CC),G;\alpha)$. 
We thus obtain that the mapping \eqref{final_eq1} 
from the statement is continuous, and this completes the proof. 
\end{proof}

The above theorem emphasizes the close relationship between the $C^*$-algebra $\Ec_1(G)$ 
and the Banach algebra $\CC\1+\Ac_{G,\CC}$, 
which not only continuously embeds as a dense inverse-closed subalgebra, 
but also shares with $\Ec_1(G)$ a couple of natural symmetry groups. 
It would be quite interesting to determine the whole group of automorphisms of $\Ec(G)$ 
that leave invariant the subalgebra $\Ac_{G,\CC}$.

\begin{corollary}\label{last}
Let $G$ is any finite-dimensional Lie group which is unimodular, amenable, and 
such that its discrete undelying group $G_d$ is rigidly symmetric. If we define define 
$$\Ac_{G,\CC}^\infty=\{T\in\Ac_{G,\CC}\mid (\Ad\,\lambda(\cdot))T\in\Ci(G,\Ac_{G,\CC})\}, $$
then $\CC\1+\Ac_{G,\CC}^\infty$ is a dense inverse-closed $*$-subalgebra of the unital $C^*$-algebra~$\Ec_1(G)$. 
Moreover, $\CC\1+\Ac_{G,\CC}^\infty$ has the structure of a Fr\'echet algebra with continuous inversion and  
is invariant under the action \eqref{final_eq1}. 
The corresponding action of $G$ on $\Ac_{G,\CC}^\infty$ is smooth 
and gives rise to a natural representation of the Lie algebra $\gg$ of $G$ 
by derivations of $\Ac_{G,\CC}^\infty$. 
\end{corollary}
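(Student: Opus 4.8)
The plan is to build $\CC\1+\Ac_{G,\CC}^\infty$ as the smooth vectors for the action \eqref{final_eq1} and then transport the general ``smooth subalgebra'' machinery. First I would recall from Theorem~\ref{final}\eqref{final_item4} that $G$ acts on the Banach algebra $\Ac_{G,\CC}$ by isometric $*$-automorphisms $a\mapsto\Ad\,\lambda(a)$ and that this action is strongly continuous; since $G$ is now a finite-dimensional Lie group, one may speak of the smooth vectors of this action, and $\Ac_{G,\CC}^\infty$ is precisely the space of those smooth vectors (here one uses that $\Ci(G,\Ac_{G,\CC})$-valuedness of the orbit map is the standard notion of smoothness, as the left-translation action on $\RUCb(G,\CC)$ lifts to a smooth action at the level of the Fr\'echet structure). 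It is a classical fact (G\aa rding-type density, plus the fact that the smooth vectors form a subalgebra when the action is by algebra automorphisms) that $\Ac_{G,\CC}^\infty$ is a dense $*$-subalgebra of $\Ac_{G,\CC}$, hence — composing with the dense continuous inclusions of Theorem~\ref{final}\eqref{final_item1},\eqref{final_item3} — $\CC\1+\Ac_{G,\CC}^\infty$ is a dense $*$-subalgebra of $\Ec_1(G)$.

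Next I would equip $\CC\1+\Ac_{G,\CC}^\infty$ with its natural Fr\'echet topology. Fixing a basis $X_1,\dots,X_n$ of $\gg$, the derived representation gives closed derivations $\de\Lambda(X_j)$ on $\Ac_{G,\CC}$, and one defines the seminorms $p_{\mathbf{m}}(T)=\Vert\de\Lambda(X_1)^{m_1}\cdots\de\Lambda(X_n)^{m_n}T\Vert_{\Ac_{G,\CC}}$ for multi-indices $\mathbf{m}$; the completeness of $\Ac_{G,\CC}^\infty$ in this countable family of seminorms is the usual argument that smooth vectors of a Banach-space representation of a Lie group form a Fr\'echet space. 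That the multiplication is jointly continuous in these seminorms follows from the Leibniz rule for the derivations $\de\Lambda(X_j)$ together with submultiplicativity of $\Vert\cdot\Vert_{\Ac_{G,\CC}}$, so $\CC\1+\Ac_{G,\CC}^\infty$ is a Fr\'echet $*$-algebra. Invariance under the action \eqref{final_eq1} and smoothness of the restricted action are immediate from the definition of smooth vectors, and differentiating that action at the identity produces the representation of $\gg$ by (continuous) derivations of $\Ac_{G,\CC}^\infty$.

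The remaining — and genuinely substantive — point is inverse-closedness and continuous inversion, i.e.\ that if $T\in\CC\1+\Ac_{G,\CC}^\infty$ is invertible in $\Bc(L^2(G))$ (equivalently in $\Ec_1(G)$) then $T^{-1}\in\CC\1+\Ac_{G,\CC}^\infty$, with inversion continuous for the Fr\'echet topology. The hard part will be this step. My plan is to bootstrap from Theorem~\ref{main}: that theorem already gives $T^{-1}\in\CC\1+\Ac_{G,\CC}$, so what must be shown is that $T^{-1}$ is again a smooth vector. This is the classical ``smooth vectors of a Banach algebra with isometric automorphic action inherit inverses'' phenomenon: one shows inductively on $|\mathbf{m}|$ that $T^{-1}$ lies in the domain of the differential operators $\de\Lambda(X_1)^{m_1}\cdots\de\Lambda(X_n)^{m_n}$, using the identity $\de\Lambda(X_j)(T^{-1})=-T^{-1}(\de\Lambda(X_j)T)T^{-1}$ together with the already-known fact that $T^{-1}\in\Ac_{G,\CC}$ (so the right-hand side makes sense in $\Ac_{G,\CC}$) and a Leibniz expansion for the higher derivatives; each step expresses a higher derivative of $T^{-1}$ as a finite sum of products of $T^{-1}$ and derivatives of $T$ of lower order, all of which have already been placed in $\Ac_{G,\CC}$. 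Continuity of inversion then follows because these formulas express the seminorms $p_{\mathbf{m}}(T^{-1})$ as continuous functions of $\Vert T^{-1}\Vert_{\Ac_{G,\CC}}$ and of $p_{\mathbf{m}'}(T)$ for $|\mathbf{m}'|\le|\mathbf{m}|$, and $T\mapsto T^{-1}$ is already continuous $\Ac_{G,\CC}\to\Ac_{G,\CC}$ by inverse-closedness in the Banach setting (Theorem~\ref{main}) combined with continuity of inversion in a Banach algebra. Finally, a reference such as \cite{Bi10} for symmetric locally convex algebras with continuous inversion can be invoked to package the Fr\'echet-algebra-with-continuous-inversion conclusion once inverse-closedness has been established. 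Since the algebras must be unitized (as $G$ is noncompact, $\1\notin\Ec(G)$), throughout one works with $\CC\1+\Ac_{G,\CC}^\infty$ and extends the derivations $\de\Lambda(X_j)$ to annihilate $\1$, which is consistent because $\Ad\,\lambda(a)\1=\1$ for all $a$.
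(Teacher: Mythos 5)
Your proposal is correct and follows essentially the same route as the paper: realize $\Ac_{G,\CC}^\infty$ as the (dense, by a G\aa rding-type argument) subalgebra of smooth vectors for the isometric action of Theorem~\ref{final}\eqref{final_item4}, check that $\CC\1+\Ac_{G,\CC}^\infty$ is inverse-closed in the Banach algebra $\CC\1+\Ac_{G,\CC}$ via the derivative-of-the-inverse identity, combine with Theorem~\ref{final}\eqref{final_item3}, and obtain the Fr\'echet structure from the standard theory of differentiable vectors (the paper cites \cite{BB13} and \cite[Th.~6.2]{Ne10} for the steps you spell out by hand). The only difference is one of packaging, not of substance.
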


\begin{proof} 
The continuity of the group action \eqref{final_eq1}  
actually shows that we have a continuous isometric representation of $G$ on $\Ac_{G,\CC}$.  
Then we obtain the dense subalgebra $\Ac_{G,\CC}^\infty$ of $\Ac_{G,\CC}$ consisting of differentiable vectors 
for that representation, just as in \cite[Cor. 3.1(ii)]{BB13}. 
Since $\Ac_{G,\CC}$ is a Banach algebra, it is easily checked that $\CC\1+\Ac_{G,\CC}^\infty$ is an 
inverse-closed subalgebra of $\CC\1+\Ac_{G,\CC}$, 
and then by Theorem~\ref{final}\eqref{final_item3} 
we obtain that $\CC\1+\Ac_{G,\CC}^\infty$ is also an  
inverse-closed subalgebra of $\Ec(G)$. 
Finally, the assertions on the topology of $\Ac_{G,\CC}^\infty$ follow by \cite[Th. 6.2]{Ne10}, 
which actually holds true for isometric Lie group actions on Banach algebras. 
\end{proof}

To conclude, we mention that some motivation for the above Corollary~\ref{last} 
can be found in the recent results of \cite{BG13} and \cite{BB13}. 
For instance, the framework in \cite{BG13} (see also \cite{BG08}) is provided by 
a so-called Lie $C^*$-system, which means a pair $(X,\Ac)$ consisting of 
some unital $C^*$-algebra $\Ac$ which admits faithful irreducible representations  
and is endowed with an injective homomorphism of Lie algebras 
$\delta\colon X\to \Der\Ac_0$, where $\Ac_0$ is some dense unital $*$-subalgebra of~$\Ac$. 
As mentioned in \cite{BG13}, this framework covers quantum physics, 
where algebras of observables are constructed in terms of some distinguished representation, 
as for instance the Fock representation. 

In the case when the Lie algebra $X$ is abelian, one studied  
certain pseudo-resolvents associated to $\delta$, 
which are families of elements of the dense $*$-subalgebra $\Ac_0$ 
that behave as resolvents of self-adjoint operators affiliated with $\Ac$ 
in some sense and which satisfy suitable commutation relations in terms of~$\delta$. 
One of the problems suggested in \cite{BG13} is that of extending their results to non-abelian Lie algebras. 

On the other hand,  it is clear that in the setting of our Corollary~\ref{last} 
(see also Theorem~\ref{final}) 
the derivative of the group homomorphism 
$\Ad\lambda(\cdot)\colon G\to\Aut\Ec_1(G)$ 
is a representation $\delta\colon\gg\to\Der\Ec_1(G)$ of the Lie algebra $\gg$  
by unbounded derivations of the $C^*$-algebra $\Ec_1(G)$.   
We thus obtain a Lie $C^*$-system in the above sense, 
for which the role of the dense $*$-subalgebra $\Ac_0$ (the common invariant domain 
of the unbounded derivations in the range of~$\delta$) 
is played by the Fr\'echet algebra $\Ac_{G,\CC}^\infty$ from the above Corollary~\ref{last}. 
Therefore it is natural to wonder what is the bearing of our specific examples of Lie $C^*$-systems 
on the problem raised in~\cite{BG13}. 

\appendix
\section{}
This appendix is dedicated to the proof of Corollary~\ref{leptin2}.
We need to introduce some extra notation. 

To any $C^*$-dynamical system $(G,\Ac,\alpha)$ there corresponds the $C^*$-dynamical system $(G_d,\Ac,\alpha)$. 
If $\Bg:=L^1(G,\Ac;\alpha)$ and $\Bg_d:=\ell^1(G_d,\Ac;\alpha)$ are their covariance algebras,  
then every simple $\Bg$-module $E$ can be made into a nondegenerate contractive Banach $\Bg$-module (see \cite[page 191]{Po92}). 

\begin{definition}\label{II2}
\normalfont 
Every nondegenerate contractive Banach $\Bg$-module $E$ is a nondegenerate contractive Banach $\Mc(\Bg)$-module in a canonical way by \cite[Th. 4.5(1)]{DJW09}, where $\Mc(\Bg)$ is the multiplier algebra of~$\Bg$. 
Then for $\Bg=L^1(G,\Ac;\alpha)$  
one obtains by \cite[Prop. 2.1]{LP91} a covariant representation of $(G,\Ac,\alpha)$ on $E$, 
hence a covariant representation of $(G_d,\Ac,\alpha)$ on $E$, 
and eventually a structure of $\Bg_d$-module on $E$, 
called the \emph{discretization} of the nondegenerate contractive Banach $\Bg$-module $E$. 
\end{definition}

\begin{lemma}\label{II3}
Let $\Xc$ and $\Yc$ be any Banach spaces. 
If $T\colon \Xc\to\Yc$ is a bounded linear operator satisfying the condition 
$$(\forall y\in\Yc)(\forall\epsilon>0)(\exists x\in\Xc)\quad 
\max\{\Vert x\Vert-\Vert y\Vert, \Vert Tx-y\Vert\}<\epsilon$$
then $T$ is surjective. 
\end{lemma}

\begin{proof}
See the final paragraph of the proof of \cite[Th. 2]{Po92}. 
\end{proof}

\begin{lemma}\label{II4}
For every simple $\Bg$-module $E$, its discretization is a simple $\Bg_d$-module. 
\end{lemma}

\begin{proof}
Denote the $\Bg$-module structure of $E$ by 
$\Bc\times E\to E$, $(f,y)\mapsto \rho(f)y$
and the $\Bg_d$-module structure of $E$ by
$\Bc\times E\to E$, $(\varphi,y)\mapsto \rho_d(\varphi)y$. 
To prove that the $\Bg_d$-module $E$ is simple, one checks that  
for arbitrary $y_0\in E$ 
the operator $\Bg_d\to E$, $\varphi\mapsto \rho_d(\varphi)y_0$ is surjective. 
To this end we use Lemma~\ref{II3}. 
The hypothesis of that lemma is satisfied with the norm on $E$ that comes from the fact that  
the simple $\Bg$-module $E$ can be made into a nondegenerate contractive Banach $\Bg$-module as noted above. 
The method of proof of \cite[Th. 2]{Po92} carries over to the present setting. 
\end{proof}

\begin{proof}[Proof of Corollary~\ref{leptin2}]
By using Proposition~\ref{leptin1} for the discrete group $G_d$,  
it follows that the covariance algebra $\ell^1(G_d,\Ac;\alpha)$ 
is isometrically $*$-isomorphic to a closed involutive subalgebra 
of $\ell^1(G,\bar\Ac;\bar\alpha)\simeq \ell^1(G)\hotimes\bar\Ac$ for a suitable $C^*$-algebra~$\bar\Ac$. 
On the other hand, the involutive Banach algebra $\ell^1(G_d)\hotimes\bar\Ac$ 
is symmetric since the discrete group $G_d$ is rigidly symmetric. 
Now, since any closed involutive subalgebra of a symmetric Banach algebra is in turn symmetric 
(see for instance \cite[Prop. 7.10]{Bi10}), 
it follows that $\ell^1(G_d,\Ac;\alpha)$ is a symmetric Banach algebra. 

To prove that also $L^1(G,\Ac;\alpha)$ is symmetric, 
let $E$ be any simple $L^1(G,\Ac;\alpha)$-module. 
The discretization of $E$ is a simple $\ell^1(G_d,\Ac;\alpha)$-module by Lemma~\ref{II4}. 
We proved above that $\ell^1(G_d,\Ac;\alpha)$ is a symmetric Banach algebra, 
so by \cite[(1)]{Le76} there exists a continuous positive sesquilinear form 
$(\cdot\mid\cdot)$ on $E$ with 
$(\rho_d(\phi)x\mid y)=(x\mid\rho_d(\phi^*)y)$ for all $x,y\in E$ and $\phi\in\ell^1(G_d,\Ac;\alpha)$. 
By using the method of proof of \cite[Cor. 6]{Po92} 
one can then prove that 
$$(\rho(f)x\mid y)=(x\mid\rho(f^*)y) \text{ for all }x,y\in E\text{ and }f\in L^1(G,\Ac;\alpha).$$
Thus $L^1(G,\Ac;\alpha)$ is a symmetric algebra by \cite[(1)]{Le76}, 
and this concludes the proof. 
\end{proof}

\subsection*{Acknowledgment} 
We are grateful to the two Referees for their numerous remarks and generous suggestions 
(in particular Remark~\ref{referee2}), 
which greatly helped us to improve the presentation and to correct several inaccuracies. 

This research has been partially supported by the Grant
of the Romanian National Authority for Scientific Research, CNCS-UEFISCDI,
project number PN-II-ID-PCE-2011-3-0131. 
The second-named author also acknowledges partial support from the Project MTM2010-16679, DGI-FEDER, of the MCYT, Spain.

\end{document}